\newtheorem{theorem}{Theorem}[section]
\newtheorem{corollary}{Corollary}[section]
\newtheorem{lemma}{Lemma}[section]
\newtheorem{prop}{Proposition}[section]
\theoremstyle{definition}
\newtheorem{definition}{Definition}[section]
\newtheorem{observation}{Observation}[section]
\newtheorem{example}{Example}[section]
\numberwithin{equation}{section}
\title{Square-free Word-representation of Word-representable Graphs}
\author{\hspace{1cm} Biswajit Das \ and Ramesh Hariharasubramanian \\ 
{{\footnotesize d.biswajit@iitg.ac.in},\ {\footnotesize  ramesh\_h@iitg.ac.in}}\\{\footnotesize Department of Mathematics, Indian Institute of Technology Guwahati, Guwahati, Assam 781039, India}}
\begin{document}
	\maketitle
	
	\begin{abstract}
	 A graph $G = (V, E)$ is \textit{word-representable}, if there exists a word $w$ over the alphabet $V$ such that for letters $\{x,y\}\in V$, $x$ and $y$ alternate in $w$ if and only if $xy \in E$. 

    In this paper, we prove that any non-empty word-representable graph can be represented by a word containing no non-trivial squares. This result provides a positive answer to the open problem present in the book \textit{Words and graphs} written by \textit{Sergey Kitaev}, and \textit{Vadim Lozin}. Also, we prove that for a word-representable graph $G$, if the representation number of $G$ is $k$, then every $k$-uniform word representing the graph $G$ is also square-free. Moreover, we prove that every minimal-length word representing a graph is square-free. Then, we count the number of possible square-free word-representations of a complete graph. At last, using the infinite square-free string generated from the Thue-Morse sequence, we prove that infinitely many square-free words represent a non-complete connected word-representable graph. 
    \\\textbf{Keywords:} word-representable graph, representation number, square-free word, Thue-Morse sequence.
	\end{abstract}

	\maketitle
	\pagestyle{myheadings}
	
\section{Introduction}
 Sergey Kitaev first introduced word-representable graphs based on the study of the celebrated Perkins semi-group \cite{kitaev2008word}. It is a very promising research area as the word-representable graphs generalize several key graph families, such as circle graphs, comparability graphs, 3-colourable graphs. As not all of the graphs are word-representable, finding graphs that are word-representable is an interesting problem. 

 Another interesting problem in the theory of word-representable graphs is finding the word $w$ that represents a graph such that $w$ contains some patterns or avoids some patterns. A square is two consecutive occurrences of a factor in a word. In our paper, we want to avoid the square in the word that represents a graph. The concept of square-free words attracted much more attention in the field of combinatorics on words since the work done by Axel Thue in his paper \cite{thue1906uber}. This work showed an infinite number of square-free words over the ternary alphabets and opened the area of combinatorics on words. The problem of finding square-free word-representation for the word-representable graph was proposed in \cite{kitaev2015words}, (\textit{Section 7.1.3}). There, it was shown that the trivial square can be avoided for all word-representable graphs except the empty graph of two vertices. Also, it was proved that for any word-representable graph, a cube-free word (a cube is a factor that occurs consecutively three times in a word) represents that graph. We want to analyze whether a non-trivial square-free word representation exists for each word-representable graph. This problem is an open problem in \cite{kitaev2015words} (\textit{Problem 7.1.10}). We positively answer this question in \textit{section \ref{sc2}} by showing that for connected word-representable graphs, after removing the squares from the word, the new word also represents the same graph. Also, for disconnected word-representable graphs, we give a square-free word that will represent that graph. After proving the existence of a square-free word for each word-representable graph, we want to count the number of square-free words representing a word-representable graph $G$. In \textit{section \ref{sc3}}, we count the number of possible square-free word-representations of the complete graph. Then, we use the infinite string generated from the Thue-Morse sequence \cite{thue1906uber} to create an infinite number of square-free words for non-complete connected word-representable graphs. 
 
 The rest of this section briefly describes all the required preliminary information on the word-representable graphs. 
 
 A \textit{subword} of a word $w$ is a word obtained by removing certain letters from $w$. In a word $w$, if $x$ and $y$ alternate, then $w$ contains $xyxy\cdots$ or $yxyx\cdots$ (odd or even length) as a subword.

\begin{definition}\textit{(\cite{kitaev2015words} , Definition 3.0.5)}.
 A simple graph $G = (V, E)$ is \textit{word-representable} if there exists a word $w$ over the alphabet $V$ such that letters $x$ and $y$ alternate in $w$ if and only if $xy \in E$, i.e., $x$ and $y$ are adjacent for each $x\not=y$. If a word $w$ \textit{represents} $G$, then $w$ contains each letter of $V(G)$ at least once.
\end{definition}

 In a word $w$ that represents a graph $G$, if $xy\notin E(G)$, then the non-alternation between $x$ and $y$ occurs if any one of these $xxy$, $yxx$, $xyy$, ${yyx}$ subwords is present in $w$.
\begin{definition}
    (\textit{\cite{kitaev2015words}, Definition 3.2.1.}) \textit{$k$-uniform word} is the word $w$ in which every letter occurs exactly $k$ times.
\end{definition}
\begin{definition} (\textit{\cite{kitaev2017comprehensive}, Definition 3})
    For a word-representable graph $G$, \textit{the representation number} is the least $k$ such that $G$ is $k$-representable.
\end{definition}
\begin{prop}(\textit{\cite{kitaev2015words}, Proposition 3.2.7})\label{uv}
		Let $w = uv$ be a $k$-uniform word representing a graph $G$, where $u$ and $v$ are two, possibly empty, words. Then, the word $w' = vu$ also represents $G$.
\end{prop}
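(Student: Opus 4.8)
The plan is to reduce the statement to an elementary fact about two-letter words and cyclic rotations. Fix two distinct letters $x,y\in V(G)$; by the definition of word-representability it suffices to show that $x$ and $y$ alternate in $w=uv$ if and only if they alternate in $w'=vu$. Write $w_{xy}$ for the subword of $w$ obtained by deleting every letter other than $x$ and $y$, and define $u_{xy}$, $v_{xy}$, $w'_{xy}$ analogously. Since deletion of letters commutes with concatenation, $w_{xy}=u_{xy}v_{xy}$ and $w'_{xy}=v_{xy}u_{xy}$; in other words, $w'_{xy}$ is the cyclic rotation of $w_{xy}$ by $|u_{xy}|$ positions. Because $w$ is $k$-uniform, both $w_{xy}$ and $w'_{xy}$ have length $2k$, with $x$ occurring exactly $k$ times and $y$ occurring exactly $k$ times in each.

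Next I would record the elementary observation that a word of length $2k$ over $\{x,y\}$ containing exactly $k$ copies of each letter is alternating (in the sense of the definition) if and only if it equals $(xy)^k$ or $(yx)^k$. The set $\{(xy)^k,(yx)^k\}$ is closed under cyclic rotation, since $(xy)^k$ has period $2$ and its only cyclic rotations are these two words; conversely, if a length-$2k$, letter-balanced word over $\{x,y\}$ has \emph{some} cyclic rotation equal to $(xy)^k$ or $(yx)^k$, then being itself a cyclic rotation of such a word it must already be $(xy)^k$ or $(yx)^k$. Applying this to the pair $w_{xy},w'_{xy}$, which are cyclic rotations of one another, we conclude that $w_{xy}$ is alternating if and only if $w'_{xy}$ is, i.e.\ $x$ and $y$ alternate in $w$ exactly when they alternate in $w'$. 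Since $x,y$ were an arbitrary pair, $w'$ induces the same edge set as $w$, namely $E(G)$, and $w'$ plainly contains every letter of $V(G)$ (note $k\ge 1$ when $G$ is non-empty; the empty case is trivial). Hence $w'$ represents $G$.

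The only genuinely delicate point is the bookkeeping of the second paragraph: one must verify that restricting to $\{x,y\}$ really turns the rearrangement $uv\mapsto vu$ into an honest cyclic rotation, and that it is precisely $k$-uniformity that forces the two restricted words to share the same multiset of letters. The hypothesis cannot be dropped: for non-uniform words a cyclic rotation can destroy alternation, e.g.\ $xyx$ alternates but its rotation $xxy$ does not. An alternative, slightly longer route is to first prove the one-letter version (moving the first letter of a $k$-uniform word to the end preserves the represented graph) and then iterate it $|u|$ times; the restriction argument above is the same idea executed in a single step, so I would present it directly.
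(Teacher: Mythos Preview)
The paper does not supply its own proof of this proposition; it merely quotes it from \cite{kitaev2015words} as a known result, so there is no in-paper argument to compare against. Your proof is correct and is essentially the standard argument: restricting a $k$-uniform word to two letters yields a balanced word of length $2k$, and among such words the alternating ones are exactly $(xy)^k$ and $(yx)^k$, a set invariant under cyclic rotation. Your closing remarks are also accurate---the $k$-uniformity is genuinely needed, and the one-letter iterated version you mention is the form in which the result is often stated.
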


The \textit{initial permutation} of $w$ is the permutation obtained by removing all but the leftmost occurrence of each letter $x$ in $w$, and it is denoted by $\pi(w)$. Similarly, the \textit{final permutation} of $w$ is the permutation obtained by removing all but the rightmost occurrence of each letter $x$ in $w$, and it is denoted $\sigma(w)$. For a word $w$, $w_{\{x_1, \cdots, x_m\}}$ denotes the word after removing all letters except the letters $x_1, \ldots, x_m$ present in $w$. 
 \begin{example}
     $w = 6345123215$, we have $\pi(w) = 634512$, $\sigma(w) = 643215$ and $w_{\{6,5\}} = 655$.
 \end{example}
	\begin{observation}(\textit{\cite{kitaev2008representable}, Observation 4})\label{pw}
		Let $w$ be the word-representant of $G$. Then $\pi(w)w$ also represents $G$.
	\end{observation}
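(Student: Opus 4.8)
The plan is to reduce the claim to a two-letter check, the standard way of verifying that a word represents a graph. Fix two distinct letters $x,y \in V(G)$. Since $w$ represents $G$, we have $xy \in E(G)$ if and only if $x$ and $y$ alternate in $w$; hence it suffices to prove that $x$ and $y$ alternate in $\pi(w)w$ if and only if they alternate in $w$. Alternation of $x$ and $y$ depends only on the restriction to $\{x,y\}$, so the first step is the elementary identity $\big(\pi(w)w\big)_{\{x,y\}} = \pi\big(w_{\{x,y\}}\big)\,w_{\{x,y\}}$: restricting to $\{x,y\}$ preserves the left-to-right order of occurrences, so it commutes with concatenation and with passing to the initial permutation, and moreover the first letter of $w_{\{x,y\}}$ is exactly whichever of $x,y$ occurs first in $\pi(w)$.

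Given this reduction, set $u = w_{\{x,y\}}$, a non-empty word over $\{x,y\}$ using both letters, let $a \in \{x,y\}$ be the letter occurring first in $u$ and $b$ the other, so that $\pi(u) = ab$. If $x$ and $y$ alternate in $w$, then $u = abab\cdots$, whence $\pi(u)\,u = ab \cdot abab\cdots = ababab\cdots$ is again alternating, so $x$ and $y$ alternate in $\pi(w)w$. Conversely, $u$ is a suffix, hence a factor, of $\pi(u)\,u$, and any factor of an alternating word is alternating; so if $x$ and $y$ alternate in $\pi(w)w$ they already alternate in $w$. (Equivalently, by contraposition: if $x,y$ do not alternate in $w$, then $u$ contains $xx$ or $yy$ as a factor, and that factor survives in $\pi(u)\,u$.)

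Putting the two directions together, $x$ and $y$ alternate in $\pi(w)w$ precisely when $xy \in E(G)$; since the pair $\{x,y\}$ was arbitrary and $\pi(w)w$ contains every letter of $V(G)$ at least once (as $w$ already does), $\pi(w)w$ represents $G$. There is no real obstacle here: the only point requiring care is the bookkeeping behind the identity $\big(\pi(w)w\big)_{\{x,y\}} = \pi\big(w_{\{x,y\}}\big)\,w_{\{x,y\}}$ and the matching of first occurrences, after which both implications are immediate.
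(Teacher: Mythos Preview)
Your proof is correct. Note, however, that the paper does not supply its own proof of this observation: it is quoted verbatim from \cite{kitaev2008representable} and stated without argument, so there is nothing in the present paper to compare your proposal against. What you have written is the standard two-letter reduction one would expect for such a statement, and all the steps---the identity $\big(\pi(w)w\big)_{\{x,y\}} = \pi\big(w_{\{x,y\}}\big)\,w_{\{x,y\}}$, the forward direction via prepending $ab$ to $abab\cdots$, and the converse via the suffix/factor observation---are sound.
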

 
 \begin{definition}\label{def1}
     For a $k$-uniform word $w$, the \textit{$i^{th}$ permutation} is denoted by $P_i$ where $P_i$ is the permutation obtained by removing all but $i^{th}$ occurrence of each letter $x$ in $w$. 
     
     We denote the $j^{th}$ occurrence of the letter $x$ in $w$ as $x_j$. 
 \end{definition}
 \begin{example}
    For word $w=142513624356152643$, $P_1=142536$, $P_2=124356$, $P_3=152643$.
 \end{example}
It can be easily seen that if $w$ is $k$-uniform, then $P_1=\pi(w)$ and $P_k=\sigma(w)$.
\begin{definition}(\textit{\cite{kitaev2015words}, Definition 3.2.8.}) 
A word $u$ contains a word $v$ as a \textit{factor} if $u = xvy$ where $x$ and $y$ can be empty words.
\end{definition}

\begin{example}
    The word $421231423$ contains the words $123$ and $42$ as factors, while all factors of the word $2131$ are $1$, $2$, $3$, $21$, $13$, $31$, $213$, $131$ and $2131$.
\end{example}
In the rest of the paper, we refer to the word-representable graphs as graphs. Also, $w=w_1w_2\cdots w_n$ denotes the word that contains $\{w_1,w_2,\ldots, w_n\}$ as factors where $w_i$ is a word possibly empty.
\noindent 
\section{Square free word-representation of a graph} \label{sc2}
In this section, we prove that besides the empty graph on two vertices, every word-representable graph has a square-free word-representation. The formal definition of square and the known results regarding square-free word-representation of a graph are described as follows.
\begin{definition}(\textit{\cite{kitaev2015words}, Definition 7.1.5})
  A \textit{square} in a word is two consecutive equal factors. A square of the form $xx$, where $x$ is a letter, is called \textit{trivial}.   
\end{definition}
For example, in $w=14251136536542$, $11$ and $365365$ are squares. Here, $11$ is a trivial square, and $365365$ is a non-trivial square. 
\begin{lemma}\textit{(\cite{kitaev2015words}, Lemma 7.1.7.)} 
If a word-representable graph G contains at least one edge, then its isolated vertices, if any, can be represented without involving trivial squares.
\end{lemma}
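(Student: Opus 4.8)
The plan is to handle the isolated vertices by building on a word that already represents the non-isolated part of the graph. Let $G = (V,E)$ be a word-representable graph with at least one edge, let $I \subseteq V$ be the set of isolated vertices, and let $H = G[V \setminus I]$ be the subgraph induced by the non-isolated vertices; since $H$ is word-representable, fix any word $w$ over $V \setminus I$ representing $H$. The idea is to append the isolated vertices to $w$ in a controlled way so that (i) each isolated vertex fails to alternate with every other letter, and (ii) no trivial square is created among the isolated-vertex letters.

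First I would note that if $|I| \le 1$ then nothing is to prove beyond the trivial case: a single isolated vertex $x$ can be placed so that it appears once inside $w$'s block structure, or appended once at the end, and a letter appearing once creates no trivial square. So assume $|I| = m \ge 2$ and write $I = \{x_1, x_2, \dots, x_m\}$. The natural construction is to form $w' = w\, x_1 x_2 \cdots x_m\, x_1 x_2 \cdots x_m$ — but this repeats the block $x_1\cdots x_m$, so while it has no \emph{trivial} square among the $x_i$, I should instead aim for something even simpler that avoids even non-trivial squares in the appended part, e.g. $w' = w\, x_1 x_2 \cdots x_m\, x_m x_{m-1} \cdots x_1$ or, if I only care about trivial squares as the lemma states, it suffices to ensure no $x_i$ is immediately followed by itself, which the block $x_1 x_2 \cdots x_m$ repeated does satisfy as long as $m \ge 2$. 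I would then verify: each $x_i$ occurs exactly twice in $w'$ and never adjacent to itself, so no trivial square $x_i x_i$ occurs; for $i \ne j$, the subword of $w'$ on $\{x_i, x_j\}$ is $x_i x_j x_i x_j$ (assuming $i<j$) which is alternating — that is a problem, since $x_i x_j \notin E$!

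So the construction must be adjusted: to kill alternation between $x_i$ and $x_j$ I need a non-alternating subword such as $x_i x_i x_j$ or $x_i x_j x_j$, which reintroduces trivial squares. The resolution, and the key step, is to use three occurrences arranged cleverly, or to exploit the letters of $w$ as separators. The cleanest approach: append $P = x_1 x_2 \cdots x_m x_1 x_2 \cdots x_m$ and observe that on the pair $\{x_i,x_j\}$ with $i<j$ this gives $x_i x_j x_i x_j$, still alternating. Instead append $P = x_1 x_2 \cdots x_m\, x_2 x_3 \cdots x_m x_1$ (a cyclic shift on the second copy): then for $i < j$, reading occurrences we get $x_i, x_j$ from the first block, then $x_j$ again before $x_i$ in the second block, giving subword $x_i x_j x_j x_i$ — non-alternating, good — but $x_j x_j$ is a trivial square. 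The honest fix is to interleave with a fresh "safe" letter: pick any vertex $a \in V \setminus I$ (exists since $E \ne \emptyset$) and set $w'' = w\, x_1 a x_1 x_2 a x_2 \cdots x_m a x_m$; here each $x_i$ appears twice separated by $a$, never adjacent to itself, each pair $x_i,x_j$ has subword $x_i x_i x_j x_j$ — wait, that again has trivial squares $x_i x_i$. I would therefore settle on the construction $w\, x_1 x_2 x_1 x_3 x_2 x_4 x_3 \cdots$, a standard "staircase" that makes consecutive $x_i$'s appear three times total and non-consecutively; the main obstacle, and what I expect to spend the most care on, is designing the appended block so that simultaneously (a) no $x_i$ is adjacent to a copy of itself, and (b) every pair $x_i, x_j$ has a non-alternating induced subword, and (c) no $x_i$ alternates with any letter of $w$ — the last point is easy since each $x_i$ can be given three occurrences forming $x_i x_i$-type non-alternation with letters of $w$, but reconciling (a) and (b) forces the "three occurrences in staircase form" idea, after which (a), (b), (c) are routine to check case by case.
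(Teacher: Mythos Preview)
The paper does not actually supply a proof of this lemma: it is quoted verbatim from Kitaev--Lozin and used as background. So there is nothing in the paper to compare your argument against, and your write-up has to stand on its own. As written, it does not.

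Your proposal is an exploration that never lands on a working construction. You correctly diagnose the failure of $w\,x_1\cdots x_m\,x_1\cdots x_m$ (pairs $x_i,x_j$ alternate) and of $w\,x_1\cdots x_m\,x_m\cdots x_1$ (the junction $x_mx_m$ is a trivial square), but the ``staircase'' $x_1x_2x_1x_3x_2x_4x_3\cdots$ that you finally commit to is neither precisely defined nor correct under the natural reading. For $m=3$ the obvious completion $x_1x_2x_1x_3x_2x_3$ gives $w'_{\{x_1,x_2\}}=x_1x_2x_1x_2$, which is alternating; so $x_1$ and $x_2$ would become adjacent. Declaring the verification ``routine'' does not rescue this.

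You were one move away from a clean argument. Instead of appending, \emph{sandwich}: take any word $u$ representing $H=G[V\setminus I]$ in which every letter occurs at least twice (e.g.\ replace $u$ by $\pi(u)u$, using Observation~\ref{pw}), and set
\[
w'\;=\;x_1x_2\cdots x_m\,u\,x_m\cdots x_2x_1.
\]
Now there is no trivial square at either boundary because $x_m$ differs from the first and last letters of $u$; for $i<j$ the induced subword on $\{x_i,x_j\}$ is $x_ix_jx_jx_i$, non-alternating; and for any $y\in V(H)$ the induced subword on $\{x_i,y\}$ is $x_i\,y^{r}\,x_i$ with $r\ge 2$, hence non-alternating. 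That is the whole proof. Your point~(c), ``no $x_i$ alternates with any letter of $w$'', is exactly where the hypothesis ``$G$ has at least one edge'' is needed---it guarantees a non-trivial $u$ to put in the middle---and you never actually used it.
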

\begin{theorem}\label{tm1} \textit{(\cite{kitaev2015words}, Theorem 7.1.8.)}
  Trivial squares can be avoided in a word-representation of all but the graph $O_2$, the empty graph on two vertices. 
\end{theorem}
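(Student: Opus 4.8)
The plan is to first determine \emph{where} trivial squares can possibly occur in any word-representant of a graph, and then deal with the two resulting cases. Suppose $w$ represents $G$ and has a factor $vv$ (two copies of the letter $v$ with nothing between them). If $v$ were adjacent to some $u$ in $G$, then $u$ and $v$ would alternate in $w$, so $w_{\{u,v\}}$ would be an alternating word over $\{u,v\}$; but restricting $w$ to $\{u,v\}$ leaves the factor $vv$ intact, a contradiction. Hence in \emph{every} representation of $G$ a trivial square $vv$ can occur only when $v$ is an isolated vertex. This splits the argument into (a) graphs containing at least one edge, where only isolated vertices can be troublesome, and (b) edgeless graphs $O_n$.

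For case (a), let $z_1,\dots,z_m$ be the isolated vertices of $G$ and let $G'=G-\{z_1,\dots,z_m\}$; then $G'$ is non-empty (it contains an edge of $G$) and, crucially, has no isolated vertex, since a vertex non-isolated in $G$ keeps a neighbour in $G'$. Pick any word $w_0$ representing $G'$ and set $w'=\pi(w_0)\,w_0$, which still represents $G'$ by Observation~\ref{pw} and in which every letter of $G'$ occurs at least twice. I claim $w=z_1z_2\cdots z_m\,w'\,z_mz_{m-1}\cdots z_1$ represents $G$ with no trivial square. Indeed, the pairs inside $V(G')$ behave exactly as in $w'$; for $i<j$ the reversed block gives $w_{\{z_i,z_j\}}=z_iz_jz_jz_i$, so $z_i$ and $z_j$ do not alternate; and $z_i$ does not alternate with a letter $x$ of $G'$ because $x$ occurs at least twice strictly between the two copies of $z_i$, so $w_{\{z_i,x\}}$ contains $xx$. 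Finally $w$ has no trivial square: none inside $w'$ by the observation above applied to $G'$, none inside the two permutation blocks, and none at the two junctions since the first and last letters of $w'$ lie in $V(G')$ and hence differ from $z_m$. (This incidentally reproves the preceding lemma.)

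For case (b), $G=O_n$. If $n=1$, the one-letter word works. If $n=2$, there is \emph{no} trivial-square-free representation: over a two-letter alphabet a word is alternating precisely when no two consecutive letters are equal, so forcing the non-edge of $O_2$ forces a factor $aa$ or $bb$ — this is exactly the exception in the statement. If $n\ge 3$, I would exhibit the concrete word obtained by concatenating three permutations,
\[
w \;=\; (1\,2\,\cdots\,n)\;(2\,3\,\cdots\,n\,1)\;(n\,(n-1)\,\cdots\,2\,1).
\]
It has no trivial square inside a block, and at the two junctions the touching letters are $n,2$ and $1,n$, which differ because $n\ge 3$. For $i<j$, the first block contributes $ij$ to $w_{\{i,j\}}$ and the third contributes $ji$, so $w_{\{i,j\}}$ equals $ijijji$ or $ijjiji$ according to whether the middle block contributes $ij$ or $ji$; in either case two equal letters occur consecutively, so $i$ and $j$ do not alternate. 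Hence $w$ represents $O_n$ with no trivial square, which would finish the proof.

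The heart of the argument is the opening observation that confines trivial squares to isolated vertices; after that, case (a) is essentially careful bookkeeping. The points I expect to need the most care are (i) ensuring the newly inserted isolated vertices do not accidentally alternate with a letter of $G'$ — which is why one first replaces $w_0$ by $\pi(w_0)w_0$ so that every letter occurs at least twice — and (ii) confirming, via the two-letter-alphabet remark, that $O_2$ is a genuine exception rather than an artefact of one particular construction.
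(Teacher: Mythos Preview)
Your argument is correct. Note, however, that the paper does not supply its own proof of this theorem: it is quoted from \cite{kitaev2015words} (Theorem~7.1.8), together with the companion Lemma~7.1.7 on isolated vertices, so there is no full in-paper argument to compare against line by line.

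What the paper does record, immediately after the statement, is the explicit word Kitaev and Lozin use for $O_n$ with $n>2$, namely $w=2134\cdots(n{-}1)\,12\cdots(n{-}1)n\,(n{-}1)\cdots 21$. Your witness for this case, the concatenation of three full permutations $(12\cdots n)(23\cdots n\,1)(n\,(n{-}1)\cdots 21)$, is different but equally valid (and arguably tidier to verify, since each block is a genuine permutation of all $n$ letters). Structurally your proof follows the same decomposition that is implicit in the cited source: first the observation that a trivial square $vv$ forces $v$ to be isolated, then the $z_1\cdots z_m\,w'\,z_m\cdots z_1$ sandwich for graphs containing an edge (which reproves Lemma~7.1.7 en route), and finally an explicit word for $O_n$. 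Your justification that $O_2$ is a genuine exception --- the two-letter-alphabet remark --- is also the standard one.
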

Therefore, for a graph $G$, there exists a word $w$ that represents $G$, and $w$ does not contain any trivial square. Moreover, in the theorem \ref{tm1}, the word $w=2134\cdots(n-1)12\cdots (n-1)n(n-1)\cdots21$  is used to represent the empty graph of order $n>2$. We can see that the word $w$ does not contain any nontrivial square. It was stated as an open problem whether a non-trivial square-free word $w$ exists for each word-representable graph. Here, we answer this question in the affirmative for word-representable non-empty graphs. 

To solve this problem, we prove some specific properties of the non-trivial squares present in a word that represents a connected graph. For vertices $x$ and $y$ in a graph $G$, $x\sim y$ denotes $x$ and $y$ are adjacent, and $x\nsim y$ denotes $x$ and $y$ are not adjacent.
\begin{lemma}\label{lm2}
    If $G(V, E)$ is a connected graph, and $w=uXXv$ represents $G$ where $X$ is a non-trivial square and $u$ and $v$ are factors of $w$, then $X$ contains every $x\in V$.
\end{lemma}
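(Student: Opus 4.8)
The plan is to argue by contradiction. Suppose $w = uXXv$ represents the connected graph $G$, but some vertex $z \in V$ does not appear in the factor $X$. Write $X = x_1 x_2 \cdots x_m$ as a word over a proper subset $S = \{x_1,\dots,x_m\}$ of $V$ (with repetitions allowed), so $z \notin S$. Since $G$ is connected, I would pick such a $z$ that is adjacent to some vertex $a \in S$ — this is possible because $S \neq \emptyset$ (as $X$ is a non-trivial square, $|X| \geq 2$) and connectivity forces an edge across the cut $(S, V\setminus S)$ if $V \setminus S \neq \emptyset$; if necessary, relabel so that $a$ witnesses this. The goal is to show that $a$ and $z$ cannot alternate in $w$, contradicting $az \in E$.

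The key observation is what the block $XX$ does to the occurrences of $a$: since $z$ does not occur in $XX$, the restriction $w_{\{a,z\}}$ contains, somewhere in its middle, the subword obtained from $XX$ restricted to $a$, namely $a^{t} a^{t} = a^{2t}$ where $t \geq 1$ is the number of times $a$ occurs in $X$. So $w_{\{a,z\}}$ has a factor $aa$ coming from the junction of the two copies of $X$ (the last $a$ of the first $X$ immediately followed — after deleting all non-$a$, non-$z$ letters, and since no $z$ lies between them — by the first $a$ of the second $X$). A factor $aa$ in $w_{\{a,z\}}$ means $a$ and $z$ do not alternate in $w$, so $az \notin E$, contradicting the choice of $a \sim z$.

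The main obstacle is making sure such a vertex $a \in S$ adjacent to some $z \notin S$ actually exists and handling the boundary cases cleanly. Connectivity of $G$ gives an edge between $S$ and $V \setminus S$ provided both are non-empty; $V \setminus S$ is non-empty by the standing assumption that $X$ misses some vertex, and $S$ is non-empty since $|X| \geq 2$. So there is an edge $\{a, z\}$ with $a \in S$, $z \notin S$. Then the argument above applies verbatim: $a$ occurs in $X$ (hence at least twice in $XX$), $z$ occurs nowhere in $XX$, so deleting everything but $a$ and $z$ collapses the central $XX$ to a run $a^{2t}$ with $t \geq 1$, exhibiting the factor $aa$ and hence non-alternation of $a$ and $z$. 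This contradicts $\{a,z\} \in E$, so no vertex of $V$ can be missing from $X$; that is, $X$ contains every $x \in V$. I would also remark that "$X$ contains every $x \in V$" is meant in the sense that every vertex occurs in the factor $X$ at least once, which is exactly the negation of the assumption we refuted.
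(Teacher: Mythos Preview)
Your proof is correct and follows essentially the same approach as the paper's: both argue by contradiction, use connectivity to produce an edge between a vertex in $X$ and one outside $X$, and then observe that the two consecutive copies of $X$ force an $aa$ pattern in the restriction to those two letters, contradicting alternation. Your version is slightly more direct in that you invoke a single cut edge between $S$ and $V\setminus S$, whereas the paper walks along a path from the missing vertex to a vertex of $X$ and inducts backward, but the underlying mechanism is identical.
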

\begin{proof}
    Suppose there exists a vertex $x$ in $V$ which is not in $X$. As $G$ is connected, therefore, there exists a path from $x$ to $y$ for $y\in X$.
    Let, $x\sim x_1\sim x_2\sim \cdots \sim x_n\sim y$ be the path from $x$ to $y$.
    If $x_n\sim y$ and $x_n\notin X$, then $w=w_1x_nw_2w_3yw_4w_3yw_4w_5x_nw_6$ where $u=w_1x_nw_2$, $X=w_3yw_4$, $v=w_5x_nw_6$. As $x_n$ and $y$ are not alternating in $w$ implies $x_n\nsim y$ in $G$. But it is not possible, therefore $x_n\in X$.
    Suppose $\forall x_i$, $2\leq i\leq n$, $x_i \in X$. Now, $x_1\sim x_2$ implies $x_1 \in X$ because, if $x_1\notin X$ then $w=w_1x_1w_2w_3x_2w_4w_3x_2w_4w_5x_1w_6$, $u=w_1x_1w_2$, $X=w_3x_2w_4$, $v=w_5x_1w_6$. As $x_1$ and $x_2$ is not alternating so $x_1\nsim x_2$. 
    As $x\sim x_1$, using the same argument, we can show that $x\in X$. But it contradicts our assumption.
    Therefore, every vertex in $V$ also appears in $X$.
\end{proof}

\begin{lemma}\label{lm3}
    Suppose $G$ is a connected graph and $w=uXXv$ represents $G$, where $X$ is non-trivial square, if $x\sim y$, $\{x,y\}\in V(G)$ then $n(X_x)=n(X_y)$, where $n(X_x)$, $n(X_y)$ are the number of $x$ and $y$ present in $X$ respectively.
\end{lemma}
\begin{proof}
    Suppose $x\sim y$, then $x$ and $y$ alternates in $w$. So, $|n(X_x)-n(X_y)|\leq 1$ as else $x$ and $y$ cannot alternate in $X$. Suppose $|n(X_x)-n(X_y)|=1$. For $n(X_x)$ and $n(X_y)$, we consider the following cases:\\
    \textbf{Case 1}: In $X$, if $n(X_x)>n(X_y)$, then  $X_{\{x,y\}}=xyx\cdots x$. So, in $XX_{\{x,y\}}=xyx\cdots xxyx$ $\cdots x$,  but it is not possible because $x$ and $y$ alternates in $w$ implies $x$ and $y$ need to alternate in $XX$. Therefore, it contradicts our assumption.\\
    \textbf{Case 2}: In $X$, if $n(X_y)>n(X_x)$, then $X_{\{x,y\}}=yxy\cdots y$. We can show that it contradicts our assumption using the same argument as in \textit{case 1}.\\
    Hence, if $x\sim y$, then $n(X_x)-n(X_y)=0$. Therefore, $n(X_x)=n(X_y)$.
\end{proof}
Using these two lemmas, we prove that for a connected graph $G$, if $w$ is a word representing $G$ that contains squares, then after removing the squares in a certain way, the new square-free word also represents $G$.
\begin{theorem}\label{tm2}
    If $G$ is a connected graph and $w$ is a word representing $G$ where $w$ contains at least one square, then there exists a square-free word $w'$ that represents $G$.
\end{theorem}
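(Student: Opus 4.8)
The plan is to reduce everything to the case of a single non-trivial square and show that deleting one of its two copies preserves the representation; iterating this then terminates because each deletion strictly shortens the word. First I would record a small but crucial observation: if $G$ is connected and has at least one edge, then no word representing $G$ can contain a trivial square. Indeed, if $w = u\,xx\,v$ and $y$ is any neighbour of $x$, then $w_{\{x,y\}}$ still contains the factor $xx$ (nothing lies strictly between the two consecutive $x$'s), so $x$ and $y$ do not alternate in $w$, contradicting $x\sim y$. Hence, for a connected $G$ with $|V(G)|\ge 2$, every square occurring in any word-representant of $G$ is non-trivial. The degenerate case $|V(G)|\le 1$ is trivial, since the one-letter word (or the empty word) represents $G$ and is square-free.

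The heart of the argument is the claim: if $G$ is connected and $w = uXXv$ represents $G$ with $XX$ a non-trivial square, then $w' = uXv$ also represents $G$. To prove it I would first combine Lemma~\ref{lm2} and Lemma~\ref{lm3} with connectedness of $G$: Lemma~\ref{lm2} says every vertex occurs in $X$, Lemma~\ref{lm3} says adjacent vertices occur equally often in $X$, and propagating this equality along paths shows that \emph{all} vertices occur the same number of times $m\ge 1$ in $X$, i.e.\ $X$ is $m$-uniform. Now fix distinct vertices $x,y$ and compare the alternation of $x,y$ in $w$ and in $w'=uXv$ by restricting to $\{x,y\}$; note $X_{\{x,y\}}$ has exactly $m$ copies of each of $x$ and $y$, so if it is alternating it equals $(xy)^m$ or $(yx)^m$, and then $X_{\{x,y\}}X_{\{x,y\}}$ is alternating as well. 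If $x\sim y$, then $x,y$ alternate in $w$, which forces $X_{\{x,y\}}$ itself to be alternating (otherwise it contains $xx$ or $yy$), hence $X_{\{x,y\}}\in\{(xy)^m,(yx)^m\}$; a direct check of the two junctions shows that deleting one copy of $X_{\{x,y\}}$ keeps $w_{\{x,y\}}$ alternating, so $x,y$ alternate in $w'$. If $x\nsim y$, then $x,y$ do not alternate in $w$: if $X_{\{x,y\}}$ is non-alternating the offending factor $xx$ or $yy$ survives in $w'_{\{x,y\}}$; and if $X_{\{x,y\}}$ is alternating then $X_{\{x,y\}}X_{\{x,y\}}$ is alternating too, so the non-alternation in $w_{\{x,y\}}$ already occurs inside $u_{\{x,y\}}$, inside $v_{\{x,y\}}$, or at one of the two outer junctions with a copy of $X_{\{x,y\}}$ — each of which is still present in $w'_{\{x,y\}}$, the two copies being identical. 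Either way $x,y$ behave the same in $w'$, completing the claim.

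Finally I would assemble the theorem by induction on $|w|$. If $|V(G)|\le 1$ the statement is trivial. Otherwise $G$ has an edge, so by the observation every square in $w$ is non-trivial; if $w$ is square-free we are done, and if not write $w=uXXv$ with $XX$ a non-trivial square. The claim gives a representant $uXv$ of $G$ which, since $X$ is non-empty, is strictly shorter than $w$, and the induction hypothesis applied to it yields a square-free representant $w'$ of $G$.

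The step I expect to be the main obstacle is establishing the claim — specifically, extracting from the two lemmas the fact that $X$ must be $m$-uniform (this is exactly what makes ``delete one copy of $X$'' a symmetric, hence safe, operation), and then being careful in the non-adjacent case that removing a copy of $X$ cannot accidentally create an alternation. The verification that the witness of $x\nsim y$ survives is where the $m$-uniformity of $X$ (so that $X_{\{x,y\}}X_{\{x,y\}}$ is alternating whenever $X_{\{x,y\}}$ is) does the real work; without it one could imagine a non-alternation that only spans the two copies of $X$ and disappears upon deletion.
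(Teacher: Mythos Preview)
Your proof is correct and in fact cleaner than the paper's. Both arguments rest on Lemmas~\ref{lm2} and~\ref{lm3}, but you extract from them a stronger structural fact that the paper does not state explicitly at this point: since $G$ is connected, the equality $n(X_x)=n(X_y)$ for adjacent pairs propagates along paths, so $X$ is $m$-uniform for some $m\ge 1$. With this in hand you can simply delete one copy of $X$, and the verification that $uXv$ still represents $G$ becomes short and symmetric --- the non-adjacent case reduces to noting that if $X_{\{x,y\}}$ is alternating then so is $X_{\{x,y\}}X_{\{x,y\}}$, hence the witness of non-alternation already lives in $u$, in $v$, or at one of the two outer junctions, all of which survive unchanged.

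The paper instead replaces $uXXv$ by $uX\pi(X)v$ when $X$ is not a permutation, and then runs a longer case analysis (five cases, two with sub-cases) to check that each possible location of a non-alternation pattern survives the replacement. This works, but it never isolates the uniformity of $X$, which is precisely what makes your deletion ``symmetric, hence safe''. Your preliminary observation that a representant of a connected graph on at least two vertices can contain no trivial square is also a nice touch: it lets you dispense with any appeal to the earlier trivial-square results and makes the induction on $|w|$ self-contained. The paper's termination argument is comparatively informal; your length induction is tighter.
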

\begin{proof}
    Suppose, $w=uXXv$, where $XX$ is the $1^{st}$ square present in $w$. According to Lemma \ref{lm2}, every vertex occurs at least once in the square $X$. If every $x$ occurs exactly once in $X$, then removing one $X$ represents the same graph. But if there exists $x$, $x\in X$ such that $x$ occurs more than once, then our claim is $w'=uX\pi(X)v$ also represents $G$ and does not contain the square $XX$. We need to check the following cases to prove that $w'=uX\pi(X)v$ also represents $G$.\\
    \textbf{Case 1}: If $x\sim y$, then they are alternating in $X$ as $x,y\in X$. From the Lemma \ref{lm3}, we know $n(X_x)=n(X_y)$. Without loss of generality, we assume $X_{\{x,y\}}=xy\cdots xy$, then $\pi(X)_{\{x,y\}}=xy$. So, $x$ and $y$ alternate in $X\pi(X)$. Therefore, using $X\pi(X)$ instead of $XX$ do not affect the alternation of $x$ and $y$.\\
    \textbf{Case 2}: If $x\nsim y$, then they are not alternating in $w$. Without loss of generality, suppose that there exists a subword $xxy$ in $w$. Then, the following disjoint cases can occur.
    \\\textbf{Case 2.1}: If $xxy \in u_{\{x,y\}}$ or $xxy \in v_{\{x,y\}}$, then replacing $XX$ with $X\pi(X)$ does not remove the subword $xxy$ . 
    \\\textbf{Case 2.2}: If $xxy\in X$, then $w=uw_1xw_2xw_3yw_4w_1xw_2xw_3yw_4v$, $X=w_1xw_2xw_3yw_4$. So, removing one $X$ from $w$ does not remove the subword $xxy$. So, we can replace $XX$ with $X\pi(X)$. \\
    \textbf{Case 2.3}: If $xxy \notin X$, then the only way for $xxy\in XX$ is when $w=uw_1xw_2yw_3xw_4w_1x$ $w_2yw_3xw_4v$,  $X=w_1xw_2yw_3xw_4$. Then $\pi(X)_{\{x,y\}}=xy$ implies $X\pi(X)_{\{x,y\}}=xy\cdots x xy$. So, $xxy$ factor appear in $X\pi(X)_{\{x,y\}}$ that is $x$ and $y$ do not alternate in $X\pi(X)$. \\
    \textbf{Case 2.4}: If $xxy\in uX$, then replacing $XX$ with $X\pi(X)$ does not remove $xxy\in uX$.\\
    \textbf{Case 2.5}: If $xxy\in Xv$, then $w=uXw_1xw_2xw_3w_4yw_5$, $X=w_1xw_2xw_3$, $\{x,y\}\notin w_2\cup w_3\cup w_4$ or $w=uXw_1xw_2w_3xw_4yw_5$, $X=w_1xw_2$, $\{x,y\}\notin w_2\cup w_3\cup w_4$ are the two possible cases. So, for these two cases, we need to check whether $X\pi(X)$ holds the non-alternation.\\
    \textbf{Case 2.5.1}: If $w=uXw_1xw_2xw_3 w_4yw_5$, where $X=w_1xw_2xw_3$, $v=w_4yw_5$ then  $y$ is present in $w_1$. Because, according to Lemma \ref{lm2}, $y\in X$ and according to our assumption only $w_1$ can contain $y$ in $X$. So, $yxx$ is present in $1^{st}$ $X$. \\
    \textbf{Case 2.5.2}: If $w=uXw_1xw_2w_3xw_4yw_5$, where $X=w_1xw_2$, $v=w_3xw_4yw_5$ then if $x$ occurs once in $X$ then $X\pi(X)v$ also contain $xxy$. If $x$ occurs more than once, then let ${w_1}_{\{x,y\}}=xyxy\cdots xy$. Therefore, $XX_{\{x,y\}}=xyxy\cdots xyxxy\cdots xyx$, where $X_{\{x,y\}}=xyxyxy\cdots xyx$ because according to our assumption, $y\notin w_2$. So, it implies that $X\pi(X)_{\{x,y\}}=xyxy\cdots xyxxy$. So, the $xxy$ factor remains in the new word. 
    
    We can remove $XX$ from $w$ using the above approaches. After removing the square $XX$ from $w=uXXv$, the new word does not create any square with the factors present in $u$. So, replacing $XX$ with $X\pi(X)$ may create only $X\pi(X)\pi(X)$ square in the new word. But, as mentioned earlier, we can eliminate one of the $\pi(X)$. Therefore, no extra squares are created in the word when we remove the squares. If more than one square exists in the word $w$, we apply these processes for all those squares. So, after removing all of these squares, we get a word $w'$, which does not contain any square and also represents graph $G$.
\end{proof}
 As there exists a square-free word for all word-representable graphs, we want to verify for a word-representable graph $G$ having representation number $k$, whether a $k$-uniform word that represents $G$ is square-free.
\begin{lemma}\label{lm4}
    If $G$ is a connected word-representable graph and the representation number of $G$ is $k$, then every $k$-uniform word representing $G$ is square-free.
\end{lemma}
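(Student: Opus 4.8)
The plan is to argue by contradiction: suppose $w$ is a $k$-uniform word representing $G$ but $w$ contains a square, say $w = uXXv$ with $X$ a non-trivial square (the trivial case $X$ a single letter would force some letter to occur at least twice consecutively, and by $k$-uniformity and the earlier structural lemmas one reduces to the non-trivial situation). By Lemma \ref{lm2}, every vertex $x \in V$ occurs in $X$, and since $G$ is connected, Lemma \ref{lm3} propagates the equality $n(X_x) = n(X_y)$ along edges, so there is a common value $m = n(X_x)$ for all $x \in V$; thus $X$ is itself an $m$-uniform word on $V$. Because $w$ is $k$-uniform and $X$ appears twice, we have $2m \le k$.

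The key step is then to show that $X$ itself represents $G$ (or a suitably rotated/truncated version does), which would contradict the minimality of $k$ since $m < k$. First I would observe that for $xy \in E(G)$, the equality $n(X_x) = n(X_y) = m$ together with the fact that $x,y$ alternate across the whole of $XX$ forces $x$ and $y$ to alternate within $X$ (otherwise the junction between the two copies of $X$ would break alternation, exactly as in Case 1 of the proof of Theorem \ref{tm2}). Conversely, for $xy \notin E(G)$, non-alternation somewhere in $w$ must be localized: if the offending factor $xxy$ (or a rotation) lies entirely inside one copy of $X$ we are done immediately; the remaining possibility is that the non-alternation only straddles the boundary of $XX$ or lies in $u$ or $v$. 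I would handle the boundary case using $m$-uniformity of $X$: since $X_{\{x,y\}}$ is a word with $m$ copies of each letter, if $x,y$ alternated in $X$ then $XX_{\{x,y\}}$ would also be alternating, contradiction — so $x,y$ already fail to alternate inside $X$. For non-alternation that appears to live only in $u$ or $v$, I would use Proposition \ref{uv} (cyclic rotation of $k$-uniform words) and Observation \ref{pw}-type arguments to move the witnessing factor, or more directly note that by $k$-uniformity the letters of $u$ and $v$ are "used up" by the copies in $XX$ in a controlled way; the cleanest route is to show $X$ is $m$-uniform and a word-representant, hence $G$ is $m$-representable with $m < k$.

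The main obstacle I anticipate is the last point: ruling out the situation where $x \nsim y$ but the only non-alternating factor of $w$ sits outside $XX$ (in $u$ or $v$), since then $X$ alone might spuriously make $x$ and $y$ alternate. To close this gap I would exploit $k$-uniformity more heavily than Theorem \ref{tm2} does: write $w = uXXv$ and count occurrences — each of $u$, $X$, $X$, $v$ contributes, and since $X$ carries $m$ copies of every letter while $u$ and $v$ together carry $k - 2m$ copies of every letter, a uniform "budget" argument shows $u$ and $v$ are themselves uniform of the same length per letter; then rotating $w$ by Proposition \ref{uv} to bring $X$ to the front and invoking the alternation equivalence on the rotated word $XX(vu)$ or on $X$ itself pins down that $X$ must already encode all adjacencies and non-adjacencies. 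Once $X$ (or its rotation) is shown to represent $G$ with $m = n(X_x) < k$ copies of each letter, this contradicts that $k$ is the representation number, so no such square $X$ can exist and every $k$-uniform representant is square-free.
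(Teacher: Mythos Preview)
Your overall strategy (contradiction via a strictly shorter uniform representant) matches the paper's, but the specific route you take has a real gap. You aim to show that the block $X$ itself represents $G$, using only the structural facts from Lemmas~\ref{lm2} and~\ref{lm3} together with rotations and a ``budget'' count. That claim cannot be established by such purely structural arguments, because it is false when the minimality hypothesis is dropped. For instance, take $G = P_3$ (the path $1\!-\!2\!-\!3$, which has representation number $2$) and the $3$-uniform word $w = 132132312$. One checks directly that $w$ represents $P_3$, and $w = uXXv$ with $u=\varepsilon$, $X=132$, $v=312$; yet $X=132$ represents $K_3$, not $P_3$, since $1$ and $3$ alternate in $X$. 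So the step ``$X$ already encodes all non-adjacencies'' fails in a setting structurally identical to yours; nothing in your rotation/budget sketch distinguishes the minimal-$k$ case from this one, because you only invoke minimality \emph{after} claiming $X$ is a representant.

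The paper avoids this by not trying to show $X$ represents $G$ at all. Instead it simply invokes the square-removal process already proved in Theorem~\ref{tm2}: from $w=uXXv$ one passes to $uXv$ (when every letter occurs once in $X$) or to $uX\pi(X)v$ (otherwise), and Theorem~\ref{tm2} guarantees this still represents $G$. Since connectedness plus Lemma~\ref{lm3} forces all letters to have the \emph{same} multiplicity $m$ in $X$ (the ``mixed multiplicity'' case would disconnect $G$), the resulting word is uniform of index $k-1$ (if $m=1$) or $k-m+1\le k-1$ (if $m\ge 2$), contradicting that $k$ is the representation number. In the $P_3$ example above this gives $uXv = 132312$, a genuine $2$-uniform representant. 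Your case analysis for non-edges is essentially an incomplete re-derivation of Theorem~\ref{tm2}; the clean fix is to cite it directly rather than to argue that $X$ alone suffices.
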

\begin{proof}
Suppose there exists a word $w=uXXv$ which is $k$-uniform and represents graph $G$. Based on the number of occurrences of letters in the square, we consider the following cases.
\\ \textbf{Case 1}: If every letter occurs once in $X$, then using the square removal process mentioned in Theorem \ref{tm2}, we can construct a word whose representation number is less than $k$ and still represents the graph $G$. But it is a contradiction as the representation number of $G$ is $k$. 
\\ \textbf{Case 2}: If every letter occurs more than once in $X$ then using the same argument mentioned in \textit{case 1}, we can contradict our assumption.
\\ \textbf{Case 3}: If there exists some letters $x_1,x_2,\ldots,x_i$, $1\leq i<|V(G)|$ occurs once and there exists some letters $y_1,y_2,\ldots,y_j$, $1\leq i<|V(G)|$ occurs more than once. Then, according to Lemma \ref{lm3}, there does not exist any edge between $x_i$ and $y_i$ as the number of occurrences of them is not equal. Therefore, $x_1, x_2,\ldots, x_i$ and $y_1, y_2,\ldots,y_j$ are parts of different disconnected components. But, it contradicts our assumption about connected graph $G$. Therefore, every vertex can occur once in the square, or if any vertex occurs more than once, then every vertex needs to occur more than once. But both cases are discussed in \textit{case 1} and \textit{case 2}. In both of the cases, we find a contradiction. Therefore, $w$ does not contain any square.
\end{proof}

 We want to check whether the minimal length word representing a graph is square-free.

\begin{lemma}\label{lm5}
     For a word-representable graph $G$, every minimal-length word representing $G$ is square-free.
\end{lemma}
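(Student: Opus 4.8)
The plan is to argue by contradiction, recycling the square-elimination of Theorem \ref{tm2} but bookkeeping word length. So let $w$ be a minimal-length word representing $G$ and suppose, for contradiction, that $w$ contains a square; write $w = uXXv$ for some square $XX$, so $X$ is a non-empty word. Exactly as in the proof of Lemma \ref{lm4}, I would first pin down the multiplicity pattern inside $X$. By Lemma \ref{lm2} every vertex of $G$ occurs at least once in $X$, and by Lemma \ref{lm3} any two adjacent vertices occur the same number of times in $X$; if $G$ is connected this propagates to a common multiplicity $t \ge 1$, so $|X| = t\,|V(G)|$ and $|\pi(X)| = |V(G)|$. (For disconnected $G$ one would run the argument inside the union of the components that $X$ meets — see the obstacle below.)

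Next I would split on $t$, in each case producing a strictly shorter representation and hence a contradiction. If $t \ge 2$, then by the case analysis in the proof of Theorem \ref{tm2} the word $w' := uX\pi(X)v$ still represents $G$; but $|w'| = |w| - |X| + |\pi(X)| = |w| - (t-1)\,|V(G)| < |w|$. If $t = 1$, then $X$ is a permutation of $V(G)$, so $\pi(X) = X$ and the previous move is vacuous; instead I would invoke the other half of the proof of Theorem \ref{tm2} — when every letter occurs once in $X$, deleting one copy of $X$ already represents $G$ — to get $w' := uXv$, which represents $G$ and has $|w'| = |w| - |V(G)| < |w|$. Either way the existence of a square in a minimal-length $w$ is impossible, which is the statement.

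The step I expect to be the real obstacle is the $t=1$ case, and with it the passage to disconnected graphs. For connected $G$ one must check carefully — this is only asserted, not proved, inside Theorem \ref{tm2} — that deleting a full permutation $X$ from $uXXv$ destroys neither the alternation of an edge $xy$ nor the non-alternation of a non-edge $xy$; the non-edge direction is the touchy one, since one has to exhibit the surviving witness factor ($xx$ or $yy$) after the deletion. For disconnected $G$ the same deletion step is genuinely subtle: a square that is a permutation of one component's vertex set may involve vertices occurring only twice in $w$, and deleting a copy of $X$ can then create a spurious alternation between vertices in different components, so one needs a more careful choice of which occurrences to delete. I would therefore either restrict the statement to connected $G$ (as is already done in Lemma \ref{lm4}) or supply this refinement explicitly, and I would flag that in the most degenerate cases only freeness of \emph{non-trivial} squares can be hoped for.
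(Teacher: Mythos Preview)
Your approach is essentially the paper's: assume a square in a minimal-length word and invoke the square-removal procedure of Theorem~\ref{tm2} to produce a strictly shorter representant, a contradiction. The paper's own proof is a three-sentence version of this, without your $t\ge 2$ / $t=1$ case split --- it simply observes that the process of Theorem~\ref{tm2} (which in either case replaces $XX$ by something of length at most $|X|+|\pi(X)|\le 2|X|$, with equality only when $X=\pi(X)$, in which case one copy is deleted outright) always strictly shortens the word.

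Your caveats are well taken, and in fact you are being more careful than the paper. The $t=1$ worry is harmless: when $X$ is a permutation, $XX$ restricted to any pair $\{x,y\}$ is $xyxy$ or $yxyx$, so deleting one $X$ cannot destroy a non-alternation witness (such a witness already lives outside the second copy of $X$) and visibly preserves alternation. The disconnected worry, however, is genuine: Theorem~\ref{tm2}, Lemma~\ref{lm2} and Lemma~\ref{lm3} are all stated for connected $G$, so the paper's one-line proof of Lemma~\ref{lm5} tacitly assumes connectedness too; and as you note, the statement is literally false for $O_2$, whose minimal-length words (e.g.\ $xxy$) contain a trivial square. Your suggestion to restrict to connected $G$, or to non-trivial squares, is the right fix --- the paper simply does not address it.
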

\begin{proof}
    Suppose there exists a minimal length word $w=uXXv$ whose length is $l$, and it represents graph $G$. Using the square removal process mentioned in Theorem \ref{tm2}, we can construct a word whose length is less than $l$ and still represents the graph $G$. But it is a contradiction as the minimal length of the word representing $G$ is $l$.
\end{proof}

Now, we know we can get a square-free word-representation for a connected graph $G$. But, if $G$ is not connected, and the connected components are word-representable, then using the square-free word-representation of these components, we derive a square-free word-representation of $G$. As we already know about the square-free word for the empty graph of vertices more than $2$, here, we consider at least one component to be a non-empty word-representable graph. The term $w\setminus l(w)$ denotes the word $w$ after removing the last letter $l(w)$ of the word $w$. For example, $w=1235463125$, then $w\setminus l(w)=123546312$.  
\begin{theorem}\label{tm3}
    If $G$ is a disconnected graph, and $G_i$, $1\leq i\leq n$, $n\in \mathbb{N}$ are the connected components of $G$ and $w_i$ is the square-free word-representation of $G_i$ and $G_1$ is a non-empty word representable graph, then the word $w=w_1\setminus l(w_1)w_2\cdots w_nl(w_1)\sigma(w_n)\cdots\sigma(w_2)$ $\sigma(w_1)\setminus l(w_1)\sigma(w_2)\cdots\sigma(w_n)l(w_1)$, where $l(w_1)$ is the last letter of the word $w_1$, represents $G$ and $w$ is a square-free word. 
\end{theorem}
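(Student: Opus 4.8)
The plan is to establish the two assertions of the theorem in turn — first that $w$ represents $G$, then that $w$ is square-free — with essentially all the difficulty concentrated in the second. Write $a:=l(w_1)$ and group the defining expression as $w=U\,a\,V\,a$, where $U=(w_1\setminus a)\,w_2\cdots w_n$ and $V=\sigma(w_n)\cdots\sigma(w_2)\,(\sigma(w_1)\setminus a)\,\sigma(w_2)\cdots\sigma(w_n)$; recall that $V(G_1),\dots,V(G_n)$ are pairwise disjoint and that, since $a$ is the last letter of $w_1$, it is also the last element of the permutation $\sigma(w_1)$.

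\textbf{$w$ represents $G$.} Delete from $w$ all letters not in $V(G_i)$: for $i\geq 2$ this leaves $w_i\,\sigma(w_i)\,\sigma(w_i)$ (the $V(G_i)$-letters occur exactly in the block $w_i$ and in each of the two blocks $\sigma(w_i)$), and for $i=1$ it leaves $(w_1\setminus a)\,a\,(\sigma(w_1)\setminus a)\,a=w_1\,\sigma(w_1)$. I would then invoke the elementary fact that if $u$ represents a graph $H$, so does $u\,\sigma(u)$ — apply Observation~\ref{pw} to the reversal $u^{R}$, note that its initial permutation is $\sigma(u)^{R}$, and use that reversing a word preserves word-representability — and, iterating (with $\sigma(u\,\sigma(u))=\sigma(u)$), so does $u\,\sigma(u)\,\sigma(u)$. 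Applied with $u=w_i$, this shows that every pair of vertices in a common $G_i$ alternates in $w$ exactly as in $G_i$. For a pair $x\in V(G_i)$, $y\in V(G_j)$ with $i<j$: $y$ occurs in the block $w_j$ and in each of the two blocks $\sigma(w_j)$; if $x\neq a$, the letters of $w$ strictly between the last $y$ of $w_j$ and the $y$ of the first $\sigma(w_j)$ all lie in $V(G_{j+1})\cup\cdots\cup V(G_n)\cup\{a\}$, so the restriction of $w$ to $\{x,y\}$ contains the factor $yy$; if $x=a$ (so $i=1$), the letters strictly between the $y$'s of the two $\sigma(w_j)$ blocks all lie in $(V(G_1)\setminus\{a\})\cup V(G_2)\cup\cdots\cup V(G_{j-1})$, again giving $yy$. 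Either way $x,y$ do not alternate, matching $xy\notin E(G)$; as every letter of $V(G)$ occurs in $w$, $w$ represents $G$.

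\textbf{$w$ is square-free: reductions.} First note that "$G_1$ non-empty" means $G_1$ has an edge, hence $|V(G_1)|\geq 2$ and the block $\sigma(w_1)\setminus a$ is nonempty — this is indispensable, since otherwise $V$ would contain the square $\sigma(w_2)\sigma(w_2)$. Next I would show $U$ and $V$ are each square-free. A square staying inside one component block cannot occur, because every $w_i$ is square-free by hypothesis and every $\sigma(w_i)$ (and $\sigma(w_1)\setminus a$) is a (sub)permutation, hence square-free; a square meeting two component blocks becomes, after replacing each letter by its component index, a square $ZZ$ with $Z$ non-constant inside the block-index word — which is $1^{|w_1|-1}2^{|w_2|}\cdots n^{|w_n|}$ for $U$ and the valley $n^{p_n}(n-1)^{p_{n-1}}\cdots 2^{p_2}\,1^{p_1-1}\,2^{p_2}\cdots n^{p_n}$ for $V$ (with $p_i=|V(G_i)|$) — which is impossible because the reduced words $(1,2,\dots,n)$ and $(n,n-1,\dots,2,1,2,\dots,n)$ have no repeated factor of length at least $2$ (the first being strictly increasing, the second strictly decreasing then increasing with its unique minimum $1$ occurring once). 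Finally, the occurrences of $a$ split $w=U\,a\,V\,a$ into $a$-free stretches, each of which is a factor of $U$ or of $V$; so a square of $w$ avoiding $a$ would lie inside $U$ or $V$, which is impossible. Hence any square $YY$ of $w$ has $a\in Y$.

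\textbf{The remaining case — the main obstacle.} The occurrences of $a$ in $w$ are: possibly several inside $w_1\setminus a$ (all occurrences of $a$ in $w_1$ bar the last), then exactly one at the boundary right after $U$, then none inside $V$, then the final letter of $w$. If $Y$ contains $a$ with multiplicity $j$, then $YY$ contains $2j$ consecutive-in-$w$ occurrences of $a$ whose successive gaps form a sequence of period $j$; pinning down which occurrences these are, one finds that in every case except two boundary alignments, either all $2j$ lie inside $w_1\setminus a$ — so $YY$ is a square inside the square-free word $U$, impossible — or the first copy of $Y$ is confined to $w_1\setminus a$, hence uses only $V(G_1)$-letters, while the second copy is forced past $U$ (into $w_2\cdots w_n$ or into $V$) and so contains a letter of some $G_m$, $m\geq 2$, contradicting $Y=Y$. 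The two surviving alignments both have $j=1$. In the first, the first copy of $Y$ ends exactly at the last $a$ of $w_1\setminus a$ and the second begins at the boundary $a$: comparing the letters just after these two $a$'s gives a vertex of $G_1$ on one side (it could be a vertex of another component only if that $a$ were the penultimate letter of $w_1$, whence $w_1$ ends in $aa$, contradicting square-freeness of $w_1$) and the first letter of $\sigma(w_n)$, a vertex of $G_n$, on the other — a contradiction. In the second, the two occurrences of $a$ in $YY$ are the boundary $a$ and the final $a$, so the second copy of $Y$ equals $V\,a$, and $Y=Y$ forces a length-$|V|$ suffix of $U$ to equal $V$; but $V$ contains $V(G_1)$-letters while that suffix begins with $\sigma(w_n)$ and so cannot — contradiction. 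This exhausts all cases, so $w$ is square-free. I expect the bookkeeping of this last paragraph — keeping straight which occurrence of $a$ is which and dispatching the two boundary alignments — to be the only genuinely delicate part; everything else is routine manipulation of factors.
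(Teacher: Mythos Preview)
Your argument is correct and, in fact, considerably more careful than the paper's own proof. For the representation claim, the paper only checks non-alternation between distinct components (by exhibiting suitable subwords) and never explicitly verifies that alternation \emph{within} each $G_i$ is preserved; you do, by computing the restriction of $w$ to $V(G_i)$ as $w_i\sigma(w_i)\sigma(w_i)$ (resp.\ $w_1\sigma(w_1)$ for $i=1$) and invoking the $u\mapsto u\sigma(u)$ observation. For square-freeness, the paper's proof is essentially a single sentence (``there does not exist any square of the form $w_iw_j$ \ldots\ so $w$ does not contain any square''), whereas you supply a genuine case analysis: first establishing that the two $a$-free blocks $U$ and $V$ are square-free via the component-index projection, then localising any remaining square around the occurrences of $a$ and dispatching the two boundary alignments. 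Your route is the honest one; the paper's is a sketch that leaves the reader to reconstruct exactly the analysis you wrote.

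Two small wording issues in your final paragraph, neither fatal. In the first boundary alignment (the $a$'s of $YY$ being the last $a$ of $w_1\setminus a$ and the boundary $a$), you compare the letters \emph{just after} the two $a$'s; this presupposes $Y$ extends past its $a$. If instead the $a$ is the last letter of $Y$, compare the letters just \emph{before}: the letter preceding the boundary $a$ is the last letter of $w_n\in V(G_n)$, while the letter preceding $q_k$ lies in $w_1\setminus a$, hence in $V(G_1)$. (The degenerate case $Y=a$ is impossible since $n\ge 2$ forces $w_2\cdots w_n$ to separate $q_k$ from the boundary $a$.) In the second alignment your sentence ``$V$ contains $V(G_1)$-letters while that suffix begins with $\sigma(w_n)$ and so cannot'' is garbled; the clean dichotomy is: if $|V|>|w_2\cdots w_n|$ then the length-$|V|$ suffix of $U$ begins inside $w_1\setminus a$, so its first letter lies in $V(G_1)$ while the first letter of $V$ lies in $V(G_n)$; and if $|V|\le |w_2\cdots w_n|$ then that suffix lies entirely in $w_2\cdots w_n$ and contains no $V(G_1)$-letter at all, whereas $V$ contains $p_1-1\ge 1$ of them from $\sigma(w_1)\setminus a$.
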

\begin{proof}
  For every $x\in V(G_i)$ and $y \in V(G_j)$ , $i\neq j$, $1\leq i\leq n$, $1\leq j\leq n$, $x$ and $y$ should not alternate in $w$. For every $G_i$, $i>1$, as subword $w_2w_3\cdots w_n \sigma(w_n)\cdots \sigma(w_3)\sigma(w_2)$ is present in $w$, so $\forall x \in V(G_i)$, $\forall y \in V(G_j)$ , $i\neq j$ , $x$ and $y$ do not alternate in $w$. In the word $w$, the subword $w_1\setminus l(w_1)w_2w_3\cdots w_n\sigma(w_n)\cdots \sigma(w_3)\sigma(w_2)$ $\sigma(w_1)\setminus l(w_1)$ is present. Therefore, $\forall x \in V(G_1)\setminus l(w_1)$, $\forall y\in G_i$, $x$ and $y$ do not alternate in $w$.
    Now, in the word $w$, $l(w_1)w_n\cdots w_3w_2\sigma(w_2)\sigma(w_3)\cdots \sigma(w_n) l(w_1)$ is a subword. So, $l(w_1)$ does not alternate with $y$, $\forall y\in G_i$, $2\leq i\leq n$. Therefore, $w$ represents $G$.
    
    There does not exist any square of the form $w_iw_j$ in $w$ where $w_i$ and $w_j$ represent the components $G_i$ and $G_j$ respectively. And as, every $G_i$, $1\leq i\leq n$, $n\in \mathbb{N}$ are connected components and $w_i$ is a square-free word-representation of $G_i$ so $w$ does not contain any square.
\end{proof}
From Theorem \ref{tm3}, we get a square-free word $w$ for disconnected graphs. We want to find a condition for reducing the occurrence of the letters in the square-free word $w$ such that the new word is still square-free. In the following, we show that if all graph components are complete graphs, then it is impossible.
\begin{corollary}
    If $G$ is a disconnected graph, and all of its connected components $G_i$, $1\leq i\leq n$, $n\in \mathbb{N}$ are complete graphs, then the square-free word-representation of $G$ is $3$-representable. Moreover, there does not exist a square-free $2$-uniform word $w$ that represents the graph $G$.
\end{corollary}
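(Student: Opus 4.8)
The plan is to establish the two assertions separately. The first is a direct occurrence-count applied to the word produced by Theorem~\ref{tm3}; the second, which is the substantive part, I would prove by contradiction through a structural analysis of $2$-uniform word-representations.

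For the first assertion I would represent each complete component $G_i$ by an arbitrary permutation $p_i$ of $V(G_i)$ (a permutation is trivially square-free) and feed these into Theorem~\ref{tm3}, ordering the components so that $G_1$ has an edge (such a component exists unless every $G_i$ is a single vertex, i.e.\ $G$ is an empty graph, in which case the word displayed just after Theorem~\ref{tm1} is already square-free and uses at most three copies of each letter when $|V(G)|>2$). Since $\sigma(p_i)=p_i$ and $l(w_1)=l(p_1)$, Theorem~\ref{tm3} yields $w=(p_1\setminus l)\,p_2\cdots p_n\,l\,p_n\cdots p_2\,(p_1\setminus l)\,p_2\cdots p_n\,l$ with $l=l(p_1)$; reading off multiplicities, every vertex of $G_1$ occurs exactly twice and every other vertex exactly three times, so $w$ is square-free and uses at most three occurrences of each letter.

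For the second assertion I would suppose, for contradiction, that a square-free $2$-uniform word $w$ represents $G$, and record two facts. (a) For each component $G_i$ with vertices $v_1,\dots,v_{m_i}$ (indexed by order of first occurrence in $w$), all pairs are adjacent, hence pairwise alternate, and an easy induction shows $w_{V(G_i)}=v_1\cdots v_{m_i}v_1\cdots v_{m_i}$; thus the restriction of $w$ to any component is a square $X_iX_i$. (b) Assigning to each letter the open interval (an ``arc'') spanned by its two occurrences, two letters alternate in $w$ precisely when their arcs cross, so arcs within a component pairwise cross while arcs of distinct components are nested or disjoint. Let $\mathrm{span}(G_i)$ denote the least block of consecutive positions containing all occurrences of $V(G_i)$. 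The crux is the claim: if $i\ne j$ and some occurrence of a vertex of $G_j$ lies in the interior of $\mathrm{span}(G_i)$, then $\mathrm{span}(G_j)\subsetneq\mathrm{span}(G_i)$. Granting it, I would pick $G_i$ with $\mathrm{span}(G_i)$ of least length; then $\mathrm{span}(G_i)$ contains no foreign occurrence, so it consists of exactly the $2m_i$ occurrences of $V(G_i)$ in consecutive positions, whence by (a) the word $X_iX_i$ is a factor of $w$ -- a square (nontrivial if $m_i\ge2$, trivial if $m_i=1$) -- contradicting square-freeness.

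The hard part will be the claim. To prove it I would use (a) to note that the arcs of $G_i$ are ``parallel'', so their union is exactly the interior of $\mathrm{span}(G_i)$; a foreign occurrence in that interior then lies strictly inside some $G_i$-arc, and since a foreign arc cannot cross a $G_i$-arc by (b), the partner of that occurrence lies in the same arc, hence inside $\mathrm{span}(G_i)$. As $G_j$ is a clique, every other vertex of $G_j$ is adjacent to this one, so its arc crosses an arc already known to lie inside $\mathrm{span}(G_i)$; the same argument then forces both of its occurrences inside $\mathrm{span}(G_i)$, and iterating over $V(G_j)$ gives $\mathrm{span}(G_j)\subsetneq\mathrm{span}(G_i)$. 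Keeping the crossing/nesting bookkeeping precise -- especially the propagation from a single foreign occurrence to the entire foreign component -- is the only delicate point; everything else, including fact (a) and the count in the first assertion, is routine.
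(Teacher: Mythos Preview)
Your proof is correct. For the first assertion both you and the paper plug permutations $w_i=p_i$ into Theorem~\ref{tm3}; your occurrence count (two copies of each $V(G_1)$-letter, three of each other letter) is the accurate reading of that construction, and your remark on the all-singletons case covers a detail the paper leaves implicit.

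For the second assertion your route is genuinely different from the paper's. The paper argues from the first letter outward: writing $w_{\{1\}\cup N_1}=1N_11N_1$ for the first component $G_1$ (this is your fact~(a) for $G_1$), it analyses where the two copies of a foreign vertex $v$ can sit relative to this pattern without alternating with any vertex of $G_1$, and concludes that either a trivial square $vv$ or the non-trivial square $1N_11N_1$ must occur as a factor; it then repeats the reasoning component by component down to $G_n$. Your chord-diagram formulation replaces this iterated case analysis by a single minimality step: once fact~(a) gives that each clique restricts to a doubled permutation and fact~(b) forbids foreign arcs from crossing $G_i$-arcs, the span-containment claim follows, and a component of minimal span immediately exhibits a square factor. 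Your approach is more structural and handles all foreign vertices simultaneously, sidestepping the somewhat informal ``one $v$ at a time'' bookkeeping in the paper's proof; the paper's argument, in turn, is more elementary in that it never introduces the arc model or the notion of span.
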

\begin{proof}
    In Theorem \ref{tm3}, if all $G_i$'s are $1$-representable then the corresponding $w_i$'s occur $3$ times in $w$ and in each $w_i$, $V(G_i)$ occurs only once. So, from Theorem \ref{tm3}, it can be shown that for $G$, there exists a $3$-uniform word $w$ which is square-free.

    Suppose there exists a word $w$ such that $w$ is a square-free 2-uniform word representing the graph $G$. Also, let each $w_i$ start with $i$, and $N_i$ is the neighbour of $i$. Without loss of generality, we assume $w$ starts with the letter $1$. As, $w$ is 2-uniform, therefore, $w_{\{1,N_1\}}=1N_11N_1$. Now, $\forall v\in V(G)\setminus \{1,N_1\}$, $v$ needs to occur $2$ times between two $1$'s, or after the two $1$'s, or else it alternates with $1$. These conditions are also true for all $u\in N_1$. So, $v$ must occur $2$ times between two $u$'s or before or after the two $u$'s. But if $vv$ occurs after $N_1$, then $w=1N_11N_1vv$ and $w$ contains a square $1N_11N_1$. Therefore, the only possible values of $w$ are $w=1N_1vv1N_1$ or $w=1vvN_11N_1$. So, it contains $vv$ as a factor.
  
 Now we consider the vertex $2$ and its neighbours $N_2$. We can use the same argument for $v'\in V(G)\setminus \{1,2, N_1, N_2\}$ and claim that $v'v'$ also occurs as a factor. We can extend this argument up to the vertex $n-1$ and its neighbour $N_{n-1}$. For the remaining graph $G_n$, the vertex $n$ and its neighbour $N_n$ need to occur $2$ times between two $n-1$'s or after the two $n-1$'s. Also, $\forall y\in N_{n-1}$, $n$ and $N_n$ need to occur $2$ times between two $y$'s, or after the two $y$'s. As $G_n$ is a complete graph, $P_iP_i$, where $P_i$ is an arbitrary permutation of the vertices of $G_n$ is the representation possible between two $n-1$'s, or after the two $n-1$'s. But it is not a square-free word. So, it contradicts the assumption.

\end{proof}
But, for a disconnected graph having word-representable components, whether we can find a shorter square-free word than the word $w$ described in Theorem \ref{tm3} if there exists a component that is not a complete graph. We show that a shorter square-free word exists rather than the word $w$.
\begin{corollary}
    For a disconnected graph $G$, whose connected components are $G_i$, $1\leq i\leq n$, $n\in \mathbb{N}$, and $w_i$ is the square-free word-representation of $G_i$, if there exists a connected component $G_j$ that is not a complete graph whose representation number is $k$, then $w= w_1\cdots w_{j-1}w_jw_{j+1}\cdots w_n\sigma(w_j)$ $\sigma(w_n)\cdots \sigma(w_{j+1})$ $\sigma(w_{j-1})$ $\cdots \sigma(w_1)$ represents $G$, $w_j$ is a $k$-uniform word representing the graph $G_j$.
\end{corollary}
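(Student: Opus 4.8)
The plan is to verify directly that two letters alternate in $w$ precisely when they are adjacent in $G$, splitting the verification into the case of two letters from the same component and that of two letters from different components; note first that $w$ contains every letter of $V(G)$ since each $w_i$ contains every letter of $V(G_i)$. Two preliminary facts will be used throughout. First, a graph is $1$-representable exactly when it is complete, so the non-completeness of $G_j$ forces its representation number $k$ to be at least $2$; hence a $k$-uniform word $w_j$ representing $G_j$ exists, every letter of $G_j$ occurs at least twice in $w_j$, and $w_j$ is square-free by Lemma~\ref{lm4}. Second, for every $i$ we have $w_{V(G_i)}=w_i\,\sigma(w_i)$, because among the blocks $w_1,\dots,w_n$ only $w_i$ uses the alphabet $V(G_i)$ and among the tail pieces $\sigma(w_j),\sigma(w_n),\dots,\sigma(w_1)$ only $\sigma(w_i)$ does.

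For two letters $x,y$ in the same component $G_i$ it suffices, by the second fact, that $w_i\sigma(w_i)$ represents $G_i$; this is the mirror image of Observation~\ref{pw} (reverse $w_i$, apply the observation, reverse back), and it applies to $i=j$ as well because the $k$-uniform $w_j$ represents $G_j$. So all intra-component alternations are correct.

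The core of the argument is the non-alternation of $x\in V(G_a)$ and $y\in V(G_b)$ with $a\ne b$. Write $w=B\,T$ with $B=w_1\cdots w_n$ and $T=\sigma(w_j)\sigma(w_n)\cdots\sigma(w_{j+1})\sigma(w_{j-1})\cdots\sigma(w_1)$. In $B$ all occurrences of $x$ form one solid run (inside $w_a$) and all occurrences of $y$ form one solid run (inside $w_b$), so $B_{\{x,y\}}$ is $x^{p}y^{q}$ or $y^{q}x^{p}$ depending on the order of $w_a$ and $w_b$, with $p,q\ge 1$, while $T$ contributes exactly one more $x$ (in $\sigma(w_a)$) and one more $y$ (in $\sigma(w_b)$). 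The reason the tail pieces are listed in the order $j,\,n,n-1,\dots,j+1,\,j-1,\dots,1$ is that for $a,b\ne j$ with $w_a$ before $w_b$ in $B$, this places $\sigma(w_b)$ before $\sigma(w_a)$ in $T$, so $w_{\{x,y\}}$ has the shape $x^{p}\,y^{\,q+1}\,x$, which contains $yy$ and is therefore non-alternating. If one of $a,b$ equals $j$ and the other component lies on the ``after-$j$'' side of the list, then $\sigma(w_j)$ comes first in $T$ and $w_{\{x,y\}}$ has the shape $z^{k}\,u^{\,q}\,z\,u$, where $z$ is the letter of $G_j$ and $u$ the other one; here one needs the leading run $z^{k}$ to have length at least two, and this is precisely where $k\ge 2$ — equivalently the $k$-uniformity of $w_j$ and the non-completeness of $G_j$ — is essential. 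The remaining positions of $a,b$ relative to $j$ all reduce to one of these two shapes, and since $x,y$ lie in different components nothing here creates a square. I expect the bookkeeping of these cases — making sure a repeated-letter block is forced in every case, and in particular that the lone $\sigma(w_j)$ placed at the front of $T$ never spoils this — to be the main obstacle; the rest is mechanical.

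Finally, I would check that $w$ is square-free and more economical than the word of Theorem~\ref{tm3}. The block $B$ has component-skeleton $1^{|w_1|}2^{|w_2|}\cdots n^{|w_n|}$, a concatenation of runs with pairwise distinct labels, so any non-trivial square in $B$ would have to lie inside a single $w_i$, contradicting that each $w_i$ (and $w_j$, by Lemma~\ref{lm4}) is square-free; the tail $T$ is square-free for the same reason, its pieces being permutations over pairwise disjoint alphabets. A square straddling the junction between $B$ and $T$ is the last possibility: when $j\ne n$ the two pieces meeting there, $w_n$ and $\sigma(w_j)$, lie in different components and such a square is ruled out exactly as at the end of the proof of Theorem~\ref{tm3}; when $j=n$ one instead picks the $k$-uniform word $w_j$ so that $w_j\sigma(w_j)$ has no square at its seam, which is possible by Proposition~\ref{uv} because $G_j$ is not complete. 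Since each $\sigma(w_i)$ now occurs only once in $w$ against the two occurrences it has in the word of Theorem~\ref{tm3}, $w$ is the shorter square-free representation promised.
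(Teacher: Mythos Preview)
Your argument is correct and follows the same direct-verification strategy as the paper: check intra-component alternation via $w_i\sigma(w_i)$, then check inter-component non-alternation, then argue square-freeness. In fact your write-up is more complete than the paper's in two respects: you explicitly verify the intra-component case (the paper never mentions it), and you isolate precisely the inter-component configuration---$z^{k}u^{q}zu$ with $z\in V(G_j)$ and the other component listed after $j$---where $k\ge 2$ is genuinely needed, whereas the paper only uses $k\ge 2$ implicitly when it says non-alternation with $V(G_j)$ follows ``because of the factor $w_1\cdots w_n$''. The one methodological difference is that for two components $a,b\neq j$ the paper shortcuts your case analysis by restricting $w$ to $V\setminus V(G_j)$, obtaining $w_1\cdots w_{j-1}w_{j+1}\cdots w_n\,\sigma(w_n)\cdots\sigma(w_{j+1})\sigma(w_{j-1})\cdots\sigma(w_1)$ and reading off the palindromic block structure of Theorem~\ref{tm3}; your direct analysis of positions relative to $j$ reaches the same conclusion. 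On square-freeness the paper is entirely hand-wavy, so your more careful treatment---including the honest flag about the $w_j\sigma(w_j)$ seam when $j=n$---is an improvement rather than a deviation.
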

\begin{proof}
   As $G_j$ is not complete, therefore, $k\geq 2$. $w_j$ is a $k$-uniform word. According to Lemma \ref{lm4}, $w_j$ is square-free. In the word $w$, $\forall x\in V(G_j)$, $\forall y\notin V(G_j)$, $x$ and $y$ do not alternate because of the factor $w_1\cdots w_{j-1}w_j\cdots w_n$. And, $w_{\{V(w_1),V(w_2),\ldots ,V(w_{j-1}),V(w_{j+1}),\ldots V(w_n)\}}=w_1\cdots w_{j-1}\cdots w_n$ $\sigma(w_n)\cdots \sigma(w_{j+1})\sigma(w_{j-1})\cdots \sigma(w_1)$. So, there is no alternation between $x\in V(G_i)$ and $y\in V(G_{i+1})$. By Theorem \ref{tm2}, each $G_i$ has a square-free word, so clearly, we can say that there are no squares (trivial and non-trivial) in $w$ and $w$ represents $G$.
\end{proof}
We proved that every word-representable graph can be represented by a square-free word except $O_2$. We know that from a word $w$ that represents a graph, we can create another word $\pi(w)w$ that also represents that graph. So, an infinite number of words exist that represent a word-representable graph. But, we want to check whether this is true for square-free words representing a word-representable graph. In the next section, we show that an infinite number of square-free words exist that represent a non-complete word-representable graph.

\section{Number of square-free words representing a graph} \label{sc3}
In this section, we want to count the number of square-free words representing a graph. We start with the complete graph $K_n$ of $n$ vertices. We find out that for $K_n$, a finite number of square-free word representations exists, and we prove it in the following.

\begin{lemma}\label{lm16}
    If $w$ is a word that represents the complete graph $K_n$ and the length of $w$ is greater than $2n-1$, then $w$ contains at least a square. 
\end{lemma}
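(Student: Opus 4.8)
The plan is to prove the contrapositive: every square-free word $w$ that represents $K_n$ satisfies $|w|\le 2n-1$. Throughout, write $n(w_v)$ for the number of occurrences of a letter $v$ in $w$; since $w$ represents $K_n$, every two letters alternate in $w$, which immediately gives $|n(w_x)-n(w_y)|\le 1$ for all $x,y$ (otherwise $x$ and $y$ cannot alternate, compare Lemma \ref{lm3}). I will argue in two steps: (i) in a square-free representant of $K_n$ every letter occurs at most twice, and (ii) the letters cannot all occur exactly twice. Given (i) and (ii), at least one letter occurs exactly once while the other $n-1$ occur at most twice, so $|w|\le 2(n-1)+1=2n-1$, as desired.

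For step (i), suppose a letter $x$ occurs $k\ge 3$ times and write $w=B_0\,x\,B_1\,x\,B_2\cdots x\,B_k$, where the displayed $x$'s are exactly the occurrences of $x$ and each $B_i$ is the maximal ($x$-free, possibly empty) factor between them. Fix any $y\ne x$ and any internal index $i$ with $1\le i\le k-1$. If $B_i$ contained no $y$, then in $w_{\{x,y\}}$ the $i$-th and $(i{+}1)$-th occurrences of $x$ would be adjacent; if $B_i$ contained two or more $y$'s, then $w_{\{x,y\}}$ would contain the factor $yy$; either way $x$ and $y$ fail to alternate, a contradiction. Hence each of $B_1,\dots,B_{k-1}$ contains exactly one occurrence of every letter other than $x$, i.e.\ each is a permutation of $V(K_n)\setminus\{x\}$. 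Now take any $y,z\ne x$: the factor of $w_{\{y,z\}}$ produced by $B_1\,x\,B_2$ is a length-$4$ word whose first two letters are the $y,z$-pair of $B_1$ and whose last two are the $y,z$-pair of $B_2$; since $w_{\{y,z\}}$ is alternating, this factor is $yzyz$ or $zyzy$, so $B_1$ and $B_2$ induce the same relative order on $\{y,z\}$. As this holds for all such pairs, $B_1=B_2$ as words, and therefore $x\,B_1\,x\,B_2=(xB_1)^2$ is a square occurring in $w$ — contradicting square-freeness. So every letter of a square-free representant of $K_n$ occurs at most twice.

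For step (ii), suppose $w$ is $2$-uniform and represents $K_n$. A quick check of the six possible interleavings of two pairs of equal letters shows that for letters $x,y$ with occurrences at positions $p_1<p_2$ and $q_1<q_2$ one has $p_1<q_1\iff p_2<q_2$. Taking $x$ to be the letter whose second occurrence $p_2$ is leftmost forces every first occurrence to lie strictly left of $p_2$; a short counting argument (using that no second occurrence precedes $p_2$) then shows that the first $n$ positions of $w$ are precisely the first occurrences of the $n$ letters and the last $n$ positions are their last occurrences. Combined with the equivalence above, this yields $w=\pi(w)\,\sigma(w)$ with $\pi(w)=\sigma(w)$, i.e.\ $w=\pi(w)\pi(w)$ is a square. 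Hence a square-free representant of $K_n$ cannot be $2$-uniform, which completes the argument.

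The main obstacle is step (i): establishing that alternation with a heavily repeated letter $x$ forces the internal blocks $B_1,\dots,B_{k-1}$ to be one and the same permutation of the remaining vertices. Once this rigid layered structure is isolated, the square $(xB_1)^2$ drops out and the $2$-uniform case in step (ii) is a routine positional analysis. I do not expect the degenerate subcases ($n=1$, empty blocks) to cause trouble, since a trivial square $xx$ still counts as a square and the statement only asserts the existence of some square.
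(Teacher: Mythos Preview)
Your proof is correct. The organization differs from the paper's: the paper argues directly that a square-free representant of length exactly $2n$ must equal $P\,P$ for some permutation $P$ of the vertices (this is essentially your step~(ii)), and leaves the case $|w|>2n$ implicit. You instead split on the maximum multiplicity of a letter, handling multiplicity $\ge 3$ by the block argument in step~(i) and the $2$-uniform case in step~(ii). The core insight is the same in both --- between two consecutive occurrences of any fixed letter there sits a full permutation of the remaining vertices, and two adjacent such permutations are forced to coincide --- but your version makes the argument for arbitrary length $\ge 2n$ self-contained, whereas the paper's version tacitly relies on the (true but unstated) fact that the length-$2n$ prefix of any longer representant of $K_n$ still represents $K_n$. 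Your block argument in step~(i) exposes the underlying rigidity more transparently, at the cost of a slightly longer write-up; the paper's route is shorter once one grants that prefix observation.
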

\begin{proof}
    Suppose $w$ is a square-free word of length $2n$, and $w$ represents the $K_n$ graph. Let, $\{1,2,3,\ldots,n\}$ be the vertices of $K_n$. Without loss of generality, we assume $w$ starts with the letter $1$. Then, $1$ cannot occur twice until every $i$, $2\leq i\leq n$ occurs once. Because if $1$ occurs twice before some $i$, then $i$ and $1$ do not alternate in $w$. But, for a complete graph, each vertex should alternate with one another. Without loss of generality, we assume the first $n$ letters of $w$ are $123\cdots n$. Now, our claim is the last $n$ letters of $w$ are also $123\cdots n$. Suppose our claim is not true, then we consider the following cases.
    \\ \textbf{Case 1}: For some $i\in V(K_n)$, $i$ occurs more than once in the last $n$ letters of the word $w$. But, from the $n$ number of vertices of $K_n$, we choose the last $n$ letters of $w$. So, if $i$ occurs more than once in the last $n$ letters, then there exists some $j \in V(K_n)$, $i\neq j$ such that $j$ does not occur in the last $n$ letters of $w$. Then, $i$ and $j$ does not alternate in $w$. But, as we mentioned earlier, it is not possible.
   \\ \textbf{Case 2}: For some $\{i,j\}\in V(K_n)$, $i\neq j$, the first $n$ letters of $w$ is $123\cdots i\cdots j\cdots n$ but in last $n$ letters of $w$ is $123\cdots j \cdots i \cdots n$. Then, $w_{\{i,j\}}=ijji$ implies $i$ and $j$ are not alternating. But, as we mentioned earlier, it is not possible.
\\Therefore, the last $n$ letters of $w$ is also $123\cdots n$. Then, $w=123\cdots n123\cdots n$, which is square. But, it contradicts our assumption about the square-free word $w$. Therefore, $w$ is a square-free word representing $K_n$ if the length of $w$ is less than $2n$.
\end{proof}
From the Lemma \ref{lm16}, we find that the length of the square-free words representing a complete graph is less than $2n$. Now, we can count the number of square-free words representing a complete graph. In the following theorem, we count the number of possible square-free word-representations of a complete graph.
\begin{theorem}
    For a complete graph $K_n$, the number of square-free word which represents $K_n$ is $n\times n!$.
\end{theorem}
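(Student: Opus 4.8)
The plan is to combine Lemma \ref{lm16} with a characterization of exactly which words of length between $n$ and $2n-1$ represent $K_n$. First I would observe that any word $w$ representing $K_n$ must be such that every pair of letters alternates; since every letter appears at least once, the shortest representations have length exactly $n$, namely the $n!$ permutations of $V(K_n)$, all of which are trivially square-free. So the count splits as (number of length-$n$ representations) plus (number of representations of length $n+1, n+2, \ldots, 2n-1$), and by Lemma \ref{lm16} there is nothing beyond length $2n-1$.

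The key structural step is to show that a word $w$ of length $n+t$ with $1 \le t \le n-1$ represents $K_n$ and is square-free if and only if it has the form $w = P\,Q$ where $P$ is a permutation of $V(K_n)$, $Q$ is a permutation of some $t$-subset $S \subseteq V(K_n)$, and moreover the relative order of the letters of $S$ in $P$ agrees with their order in $Q$ (so that no pair $\{i,j\} \subseteq S$ forms the pattern $ijji$), and additionally the letters of $S$ appearing in $Q$ must occupy the first $t$ "slots" compatible with alternation — essentially, if a letter repeats it repeats "as a suffix-permutation" of the initial permutation. Concretely, I expect the clean statement to be: $w$ represents $K_n$ iff $w = uv$ where $u$ is a permutation of $V(K_n)$ and $v$ is obtained by deleting letters from the end of $u$ — i.e. $v = \sigma$ restricted to a suffix — wait, more carefully: repeating a letter $i$ forces every letter that must still alternate with $i$ to also repeat, which by the connectivity/completeness argument of Lemma \ref{lm3}-style reasoning (all letters adjacent) forces the repeated letters to form a set closed under "must co-repeat," and the second block to be an exact copy of the corresponding subpermutation of $u$. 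I would make this precise by induction on $t$ or by directly analyzing, for the first letter that repeats, which other letters are squeezed between its two occurrences.

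Once the characterization is in hand, the counting is routine: for each $t$ from $0$ to $n-1$, choosing the length-$n$ initial permutation $P$ ($n!$ ways) and then choosing which $t$ letters repeat — but here is the subtlety that makes the total come out to $n \cdot n!$ rather than something with a binomial sum: the set of repeated letters is \emph{not} free; it is forced to be a \emph{suffix} of $P$ of length $t$ (because if letter $i$ at position $p$ in $P$ repeats, every letter after $i$ in $P$ also fails to alternate with $i$ unless it too repeats after $i$'s second occurrence, cascading to force exactly the last $t$ letters of $P$). Hence for each $P$ and each $t \in \{0, 1, \ldots, n-1\}$ there is exactly one valid $w$, giving $\sum_{t=0}^{n-1} n! = n \cdot n!$ words, and I would separately check these are genuinely distinct and genuinely square-free (square-freeness: the two blocks are a permutation and a proper suffix-permutation of it, so they can never be equal as the longer block strictly contains letters the shorter lacks; internal squares are impossible since all letters are distinct within each block and a cross-boundary square would need a repeated \emph{factor}, impossible as the left block's letters are all distinct).

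The main obstacle I anticipate is pinning down rigorously the "cascade" claim — that the repeated letters are forced to be precisely a suffix of the initial permutation and that the second block is forced to be an exact copy of that suffix in the same order. This requires carefully arguing that in a word over the complete graph, if $i$ occurs twice then any letter occurring between the two $i$'s more than once is fine but any letter occurring after $i$ in $\pi(w)$ must reoccur (to alternate with $i$), and any letter before $i$ in $\pi(w)$ must \emph{not} reoccur before $i$'s second occurrence, and assembling these local constraints into the global "$w = P \cdot (\text{length-}t\text{ suffix of }P)$" form. I would handle this by taking the leftmost repeated letter and showing its second occurrence must come at the very end of the "new" letters, then peeling off and inducting. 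Everything after that is bookkeeping.
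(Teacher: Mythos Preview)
Your approach is essentially the paper's: bound the length by Lemma~\ref{lm16}, then show that every square-free representant of $K_n$ is a permutation $P$ followed by a uniquely determined extension of each length $t\in\{0,\ldots,n-1\}$, giving $n\cdot n!$. The paper argues this as its Case~1 and then checks (Cases~2--3) that extending on the left or on both sides produces nothing new; you sidestep those cases by working directly with $P=\pi(w)$, which is fine once you observe that the first $n$ letters of any such $w$ must already be a permutation.

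One concrete slip: your cascade is pointed the wrong way. If $i$ repeats in $w=Pv$, then for any $j$ \emph{after} $i$ in $P$ we get $w_{\{i,j\}}=ij\,i\cdots$, which alternates without $j$ repeating; it is the letters \emph{before} $i$ in $P$ that are forced to repeat, since for such $j$ we see $w_{\{i,j\}}=ji\,i\cdots$ unless $j$ reappears between the two $i$'s. So the set of repeated letters is a \emph{prefix} of $P$, not a suffix, and the valid words are $P,\;Pp_1,\;Pp_1p_2,\;\ldots,\;Pp_1\cdots p_{n-1}$ where $P=p_1\cdots p_n$ --- exactly the list the paper writes down. The count $n\cdot n!$ is unaffected by this reversal, but your justification (``every letter after $i$ in $P$ also fails to alternate\ldots'') is wrong as written and would need to be flipped when you carry it out.
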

\begin{proof}
    According to Lemma \ref{lm16}, the length of a square-free word-representation of a complete graph $K_n$ is less than $2n$, and the length of a word-representation of a graph of $n$ vertices is greater than $n-1$. Let, $V=\{1,2,3,\ldots,n\}$ be the vertices of $K_n$. We know that any permutation of $V$ is a word that represents $K_n$. Let $P$ be an arbitrary permutation of $V$. Without loss of generality, we assume $P=123\cdots n$. Now, we can extend this word $P$ up to $2n-1$ length word representing $K_n$. We can extend this word $P$ in the following manner.
    \\ \textbf{Case 1}: We can add new letters after the word $P$. In the proof of Lemma \ref{lm16}, we can see that if $P=123\cdots i\cdots j \cdots n$, $\{i,j\}\in V$, $i\neq j$, then we are only able to add letter $j$ after $i$ in the extension of $P$. Also, we can extend $P$ to $P1$, $P12$, $\cdots$, $P123\cdots i\cdots j \cdots n-1$. There does not exist any extension of $P$, where $w=P12\cdots i-ji$, $j<i-1$ and $w$ represents $K_n$. Because, $w_{\{i-j+1,i\}}=i-j+1ii$ implies $i-j+1$ and $i$ does not alternate in $w$. But, every letter should alternate with one another in the word $w$. Therefore, from $P$, we can construct $n$ number of square-free words (including $P$). So, there exists $n!$ number of permutations that represent the graph $K_n$. Therefore, we can construct $n\times n!$ number of square-free words representing the graph $K_n$.
    \\ \textbf{Case 2}: We can add new letters before the word $P$. If $P=123\cdots i\cdots j \cdots n$, $\{i,j\}\in V$, $i\neq j$, then we can add letter $i$ before $j$ in the extension of $P$. If we add $i$ after $j$ in the extension, then there exists a $jiij$ subword in the extension. But, $i$ and $j$ should alternate in the extension. So, $i$ can not appear after the $j$ in the extension of $P$. Also, as mentioned in \textit{case 1}, if $i$ and $j$ are added to the new word, then every letter after $j$ and every letter between $i$ and $j$ also appear in the new word. We claim that every word we can create using this process is already considered in \textit{case 1}. Suppose $w= i\cdots j\cdots n-1n123\cdots i-1i\cdots j \cdots n$ is a word not included in \textit{case 1}. But, in $w$, $P_1=i\cdots j\cdots n-1n123\cdots i-1$ is a permutation of $V$, and $i\cdots j \cdots n$ is the extension we add after $P_1$. This case is already considered in \textit{case 1}. Therefore, it contradicts our assumption. So, adding a new letter in front of a permutation is already included in \textit{case 1}.
    \\ \textbf{Case 3}: We can add new letters before and after the word $P$. But, from \textit{case 1} and \textit{case 2}, we know that for adding a new letter in $P$, we need to maintain the same order of the letters present in the word $P$. If, $P=123\cdots l\cdots m\cdots i\cdots j \cdots n$, then we can create a word $i\cdots j \cdots n123\cdots l\cdots m\cdots i-1i\cdots j \cdots n123\cdots l\cdots m$ and it also represents the graph $K_n$. But, we can see that $P_2=i\cdots j \cdots n123\cdots l$ $\cdots m\cdots i-1$ is a permutation of $V$ and $i\cdots j \cdots n123\cdots l\cdots m$ is the extension we add after $P_2$. So, this case is also considered in \textit{case 1}.
    \\Therefore, the number of square-free words representing the graph $K_n$ is $n\times n!$.
    
\end{proof}
For complete graphs, we count the number of all possible square-free word-representations. Now, we can try to count the number of all possible square-free word-representations of the other word-representable graphs. For other word-representable graphs, we describe a way to create more square-free word-representations from the infinite square-free string derived from the Thue-Morse sequence. The formal definitions and theorems regarding the Thue-Morse sequence are described below.

\begin{definition}\cite{shallit2009asecond}
Let, for $n\geq 1$, $n\in \mathbb{N}$
 \begin{equation*}
      t_n=
    \begin{cases}
        0, & \text{if the number of $1$'s in the base-2 expansion of $n$ is even;}\\
        1, & \text{if the number of $1$'s in the base-2 expansion of $n$ is odd;}
    \end{cases}  
    \end{equation*}
    Then, $t=t_1t_2t_3\cdots$, this infinite string $t$ is called the \textit{Thue-Morse sequence}.
\end{definition}
\begin{theorem} \textit{(\cite{shallit2009asecond}, Theorem 2.5.2.)}\label{tm8}
    For $n\geq 1$,  define $c_n$ to  be  the  number  of $1$'s  between  the $n^{th}$  and $(n+1)^{th}$  occurrence of $0$ in  the string $t$. Set $c=c_1c_2c_3\cdots$. Then $c=210201\cdots$ is an infinite square-free string over the alphabet $\{0,1,2\}$.
\end{theorem}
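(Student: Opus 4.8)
The plan is to reduce the square-freeness of $c$ to the classical theorem of Thue (which appears earlier in Shallit's book) that the Thue--Morse word $t$ is \emph{overlap-free}, meaning that no factor of $t$ has the form $axaxa$ with $a$ a single letter and $x$ a (possibly empty) word. First I would record the block decomposition of $t$ induced by its occurrences of $0$: letting $p_n$ be the position of the $n$-th $0$ in $t$ and $V_n := t_{p_n} t_{p_n+1}\cdots t_{p_{n+1}-1}$, we have $V_n = 0\,1^{c_n}$, and, after a finite (possibly empty) prefix consisting only of $1$'s, $t$ is exactly the concatenation $V_1 V_2 V_3 \cdots$. Since $t$ is overlap-free it is not ultimately constant, hence contains infinitely many $0$'s, so every $c_n$ is defined and $c$ is an infinite word; and since $111$ is an overlap (take $a=1$, $x=\varepsilon$) it cannot occur in $t$, so $c_n\le 2$ for every $n$ and $c$ is a word over $\{0,1,2\}$.

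The heart of the argument is the lifting of a square in $c$ to an overlap in $t$. Suppose, for contradiction, that $c$ contains a square, i.e.\ there are $i\ge 0$ and $k\ge 1$ with $c_{i+1}\cdots c_{i+k} = c_{i+k+1}\cdots c_{i+2k}$. Set $W := V_{i+1}V_{i+2}\cdots V_{i+k} = 0\,1^{c_{i+1}}\,0\,1^{c_{i+2}}\cdots 0\,1^{c_{i+k}}$. The equality of the two halves of the square gives $V_{i+k+1}\cdots V_{i+2k} = W$ as well, so $t$ contains the factor $WW$; moreover the block $V_{i+2k+1}$ immediately following it begins with $0$, so $t$ in fact contains $WW0$. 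Writing $W = 0Y$ with $Y := 1^{c_{i+1}}\,0\,1^{c_{i+2}}\cdots 0\,1^{c_{i+k}}$, the factor $WW0$ is precisely $0\,Y\,0\,Y\,0$, an overlap (with $a=0$, $x=Y$; when $Y=\varepsilon$ this is just $000$, still an overlap). This contradicts the overlap-freeness of $t$. Hence $c$ contains no square, trivial or not, so $c$ is square-free. Finally, from $t = 01101001100101101001\cdots$, counting $1$'s between successive $0$'s gives $c_1c_2c_3c_4c_5c_6 = 210201$, as claimed.

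I expect the only genuinely delicate point to be the bookkeeping in the lifting step: one must make sure the two occurrences of the block word $W$ really sit consecutively inside $t$ (this is exactly why the clean decomposition $t = V_1V_2\cdots$ is worth isolating first), and that there is always a further block $V_{i+2k+1}$ to supply the trailing $0$ that upgrades the square $WW$ into the overlap $0Y0Y0$ --- the latter being automatic because $t$ has infinitely many $0$'s. Everything else is either the cited overlap-freeness of $t$ or a short finite computation, so no further machinery (e.g.\ square-free morphisms) is needed.
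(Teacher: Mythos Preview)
Your argument is correct and is exactly the classical proof: decompose $t$ into blocks $V_n=0\,1^{c_n}$, use overlap-freeness of $t$ to bound $c_n\le 2$, and lift any hypothetical square $c_{i+1}\cdots c_{i+k}=c_{i+k+1}\cdots c_{i+2k}$ in $c$ to the overlap $0Y0Y0$ in $t$. The bookkeeping you flag (consecutivity of the two copies of $W$ and the availability of a trailing $0$) is handled correctly.

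There is nothing to compare against in the paper itself: the paper does not prove this statement at all but merely quotes it as Theorem~2.5.2 of Shallit's book, using it later as a black box to build infinitely many square-free representants via the morphism $h$. Your proof is in fact the same argument Shallit gives (and Thue before him), so in that sense you have reproduced the intended proof. One tiny caveat unrelated to your reasoning: the paper's definition of $t$ starts the indexing at $n\ge 1$, which would make $t=1101001\cdots$ and shift $c$ to $10201\cdots$; the stated value $c=210201\cdots$ and your computation both correspond to the standard indexing from $n=0$, so the discrepancy is in the paper's definition, not in your work.
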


   In \textit{Definition \ref{def1}}, we defined $P_i$ is the $i^{th}$ permutation in a word $w$. The infinite square-free word is over three letters, so we can replace each letter with a permutation $P_i$. But after replacing each letter, does the new word represent the same graph and remain square-free? For that, we derive some properties of the permutations present in a word $w$ that represents a graph $G$. All the graphs we have considered here are assumed to be connected word-representable graphs.
\begin{lemma}\label{lm6}
      If the representation number of a graph $G$ is $k$, $k>2$ and $w$ is a $k$-uniform word representing the graph $G$, then each $P_i$, $1\leq i\leq k$, is unique in $w$.   
\end{lemma}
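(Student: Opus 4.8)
The plan is to argue by contradiction: suppose two of the permutations coincide, say $P_i = P_j$ with $1 \le i < j \le k$, and deduce that $G$ can be represented with strictly fewer than $k$ occurrences of every letter, contradicting that the representation number of $G$ is $k$. Since $G$ is connected and $k$ is its representation number, Lemma \ref{lm4} already tells us $w$ is square-free, and I intend to use this (in particular the absence of trivial squares, which rigidifies the pattern of consecutive occurrences of any letter) as a structural tool. The candidate reduced word is $w'$, obtained from $w$ by deleting the $(i+1)$-th, $(i+2)$-th, \dots, $j$-th occurrences of \emph{every} letter simultaneously; this $w'$ is $\big(k-(j-i)\big)$-uniform with $k-(j-i)\le k-1$, so if $w'$ represents $G$ we are done.

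For adjacent pairs the verification is routine: if $xy\in E(G)$ then $w_{\{x,y\}}$ is perfectly alternating, and deleting a consecutive block of occurrences of both $x$ and $y$ leaves the two-letter restriction perfectly alternating, so $x$ and $y$ still alternate in $w'$. The delicate point — and what I expect to be the main obstacle — is the non-adjacent case: if $xy\notin E(G)$ I must show that some non-alternation between $x$ and $y$ (a factor $xx$ or $yy$ in $w_{\{x,y\}}$) survives in $w'$. Here the hypothesis $P_i=P_j$ is essential: it forces the $i$-th occurrences of $x$ and $y$ and their $j$-th occurrences to appear in the \emph{same} relative order, which I will combine with a case analysis on where a witnessing factor $xx$ or $yy$ sits relative to the deleted occurrences. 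The genuinely hard subcase is when the only such witnessing factors are all located among the deleted occurrences $i+1,\dots,j$ and are erased by the deletion; this can indeed happen for a single isolated pair, so the argument must exploit the \emph{global} consistency that $P_i=P_j$ imposes on all pairs at once, together with connectedness — via a shortest $x$--$y$ path and the rigidity of alternating pairs along it, in the spirit of Lemmas \ref{lm2} and \ref{lm3} (and, if convenient, cyclically rotating $w$ using Proposition \ref{uv} to place the two coinciding layers favorably). The plan is to show that such an erasable-witness pair would propagate, via an adjacent vertex on the path, a mismatch between the $i$-th and $j$-th layers of a neighbouring pair, contradicting $P_i=P_j$.

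Once $w'$ is shown to represent $G$, the contradiction is immediate from the definition of representation number; alternatively one may invoke the minimality argument of Lemma \ref{lm5}. I would conclude by remarking that since $i<j$ were arbitrary, all of $P_1,\dots,P_k$ are pairwise distinct, which is the assertion. The only step I anticipate needing real care is the non-adjacent case described above; the rest should follow the pattern of the square-removal arguments already developed in Section \ref{sc2}.
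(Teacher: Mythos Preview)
Your strategy coincides with the paper's: assume $P_i=P_j$ with $i<j$, delete redundant occurrences to obtain a shorter uniform word still representing $G$, and contradict the minimality of $k$. Two simplifications relative to your plan are worth noting.

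First, the paper deletes only the single layer $P_j$ (that is, the $j$-th occurrence of every letter), producing a $(k-1)$-uniform word; there is no need to remove the whole block of layers $i{+}1,\dots,j$, since losing one layer already forces representation number $<k$.

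Second, and more importantly, the non-adjacent case is dispatched far more quickly than you anticipate, and without any connectedness/path argument or appeal to Lemma~\ref{lm4}. Writing $x_m,y_m$ for the $m$-th occurrences of $x,y$ in $w$, the paper takes as its witness of non-alternation a layer $l$ at which the relative order of $x$ and $y$ is reversed compared with the others (say $y_l<x_l$ while $x_m<y_m$ elsewhere). If $l\neq j$, this reversed layer survives deletion of $P_j$ and continues to witness non-alternation in $w'$. If $l=j$, then the hypothesis $P_i=P_j$ says precisely that the relative order of $x$ and $y$ in layer $i$ equals that in layer $j$, hence $y_i<x_i$ as well, and layer $i$ --- which is \emph{not} deleted --- supplies a surviving witness. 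In other words, the mirroring you already identified (``$P_i=P_j$ forces the $i$-th and $j$-th occurrences of $x$ and $y$ to appear in the same relative order'') is exactly what the paper uses to resolve the subcase you flag as hard; no global consistency-along-paths argument and no cyclic rotation via Proposition~\ref{uv} is invoked. Your plan is on the right track but over-engineers this step.
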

\begin{proof}
    Suppose there exist $P_i$, $1\leq i\leq k$ and $P_j$ for $1\leq j\leq k$, such that $P_i=P_j$, $i\neq j$. Now, our claim is if we remove $P_j$ from $w$, then the new word $w'$ also represents $G$. 

    For proving $w'$ representing graph $G$, we are considering the following cases for two arbitrary vertices $x$ and $y$ of the graph $G$. Without loss of generality, we assume $i<j$.
    \\
    \textbf{Case 1}: If $x\sim y$, then $x$ and $y$ should alternate in $w'$. We know that $x$ and $y$ are alternating in $w$. Without loss of generality, assuming $x$ occurred before $y$. Therefore, $x_1<y_1<x_2<y_2<\cdots<x_i<y_i<\cdots <x_j<y_j<\cdots<x_k<y_k$ where $x_i$ and $y_i$ are the $i^{th}$ occurrence of $x$ and $y$ respectively in $w$. So, in $w'$, $x_j$ and $y_j$ term is removed, then it becomes $x_1<y_1<x_2<y_2<\cdots<x_i<y_i<\cdots <x_{j-1}<y_{j-1}<x_{j+1}<y_{j+1}<\cdots<x_k<y_k$. From this, we see that the alternation is preserved in $w'$.
    \\
    \textbf{Case 2}: If $x\nsim y$, then $x$ and $y$ should not alternate in $w'$. Without loss of generality we assume, $x_1<y_1<x_2<y_2<\cdots<y_l<x_l<\cdots <x_k<y_k$, where $x_i$ and $y_i$ are the $i^{th}$ occurrence of $x$ and $y$ respectively in $w$ and $k>2$. If $k=2$, then only $i^{th}$ and $j^{th}$ occurrences of $x$ and $y$ is possible to present in $w_{\{x,y\}}$. But after eliminating any of the $i^{th}$ and $j^{th}$ occurrences from $w_{\{x,y\}}$,we obtain only the $xy$ term. But, $x$ and $y$ cannot alternate in $w_{\{x,y\}}$. Now, if $l\neq i$ or $l\neq j$ then the $l^{th}$ occurrence of $x$ and $y$ is $y_l<x_l$ while the other occurrences of $x$ and $y$ are $x<y$. So, $w'$ preserves the non-alternation of $x$ and $y$. If $l=j$ then  $x_1<y_1<x_2<y_2<\cdots<y_j<x_j<\cdots<x_k<y_k$. But as $P_i=P_j$, so in the $i^{th}$ occurrence of $x$ and $y$, $y_i<x_i$ occurs. So, after removal of $P_j$, the occurrence of $x$ and $y$ is $x_1<y_1<x_2<y_2<\cdots<y_i<x_i<\cdots<x_k<y_k$. As the non-alteration of $x$ and $y$ are preserved in $w'$, $w'$ also represents the graph $G$. But $w'$ is a $k-1$-uniform word, which is a contradiction as the representation number of $G$ is $k$. Therefore, each $P_i$ is unique in $w$.
\end{proof}
For a word $w$, the terms $s(w)$ and $l(w)$ represent the first and last letter of $w$ respectively. For example, in the word $w=1243142$, $s(w)=1$ and $l(w)=2$. In the following lemma, we prove that for any two permutations derived from a word, the starting letter of one permutation is not equal to the last letter of the other.  
\begin{lemma}\label{lm7}
 If $w$ is a k-uniform word representing a graph $G$, then for all $P_i$ and $P_j$, $i\neq j$, $l(P_i)\neq s(P_j)$ and for any $P_i$, $s(w)\neq l(P_i)$, $l(w)\neq s(P_i)$. 
\end{lemma}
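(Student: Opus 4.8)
The plan is a short proof by contradiction that exploits the extremal meaning of the first and last letter of a permutation $P_m$. The first step is to record the translation between these letters and occurrence positions: since $w$ is $k$-uniform each letter occurs exactly $k$ times, so each $P_m$ really is a permutation of $V(G)$, and, writing $z_m$ for the position of the $m^{th}$ occurrence of a letter $z$ in $w$, the letter $s(P_j)$ is exactly the letter whose $j^{th}$ occurrence is leftmost among all $j^{th}$ occurrences, while $l(P_i)$ is the letter whose $i^{th}$ occurrence is rightmost among all $i^{th}$ occurrences. I would also note that $s(w)=s(P_1)$ and $l(w)=l(P_k)$, because $P_1=\pi(w)$ and $P_k=\sigma(w)$; this lets the two ``boundary'' assertions follow from the main one (taking $j=1$, respectively $i=k$) together with the trivial remark that a single permutation of $V(G)$ has distinct first and last letters when $|V(G)|\ge 2$.

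For the main assertion, suppose $l(P_i)=s(P_j)=x$ with $i\neq j$. By the translation above, for every $y\neq x$ we have $y_i<x_i$ and $x_j<y_j$ (here the relative order of $i$ and $j$ is irrelevant). Fix such a $y$ and consider $w_{\{x,y\}}$, a word of length $2k$. If $x\sim y$, then $w_{\{x,y\}}$ is alternating and hence equals $xyxy\cdots xy$ or $yxyx\cdots yx$: in the first case the occurrences interleave as $x_m<y_m$ for all $m$, contradicting $y_i<x_i$; in the second case $y_m<x_m$ for all $m$, contradicting $x_j<y_j$. Hence $x\nsim y$, and since $y$ was arbitrary, $x$ is isolated in $G$ --- contradicting that $G$ is connected with $|V(G)|\ge 2$. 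Therefore $l(P_i)\neq s(P_j)$, and the boundary statements $s(w)=s(P_1)\neq l(P_i)$ for $i\neq 1$ and $l(w)=l(P_k)\neq s(P_i)$ for $i\neq k$ follow, the remaining cases $i=1$ and $i=k$ being the trivial ones noted above.

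The argument is almost entirely bookkeeping, and the only place that needs care is the case split on the two alternating shapes of $w_{\{x,y\}}$ --- this is exactly where $k$-uniformity is essential, since it guarantees $w_{\{x,y\}}$ has no extra occurrences of $x$ or $y$ that could break the clean interleaving pattern and let one of the two forced inequalities hold after all. I would also flag at the outset that the single-vertex graph is excluded (there $P_i$ is a one-letter word and $l(P_i)=s(P_j)$ trivially), which is already ensured by the standing assumption of this section that $G$ is a non-complete connected graph.
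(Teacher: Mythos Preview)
Your proof is correct and follows essentially the same route as the paper's: both assume $l(P_i)=s(P_j)=x$, use the extremality of $x$ in $P_i$ and $P_j$ to force $y_i<x_i$ and $x_j<y_j$ for any other letter $y$, and then observe that an alternating $w_{\{x,y\}}$ must have the same relative order of $x_m$ and $y_m$ for every $m$, yielding a contradiction via connectedness. The only cosmetic difference is that the paper fixes a single neighbour $N_x$ of $x$ and contradicts directly, whereas you show that \emph{every} $y$ must be a non-neighbour and conclude that $x$ is isolated; the boundary cases $s(w),l(w)$ are handled identically via $s(w)=s(P_1)$, $l(w)=l(P_k)$.
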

\begin{proof}
 Suppose  there exist two permutations $P_i$ and $P_j$, $i<j$ such that $l(P_i)=s(P_j)$. Let, $x=l(P_i)=s(P_j)$, then in $w$, the occurrence of $x$ is $x_1<x_2<\cdots<x_i<\cdots<x_j<\cdots<x_k$, where $x_i$ denote the $i^{th}$ occurrence of $x$. Now, let $N_x$ be the neighbour of $x$, then $x$ and $N_x$ should alternate in $w$. There are two cases for the occurrence of $x$ and $N_x$.\\
 \textbf{Case 1}: If $(N_x)_1<x_1$, then for each $i^{th}$ occurrence of $N_x$ and $x$ in $w$ is $(N_x)_i<x_i$ . Therefore, the occurrence of $N_x$ and $x$ is $(N_x)_1<x_1<(N_x)_2<x_2<\cdots<(N_x)_i<x_i<(N_x)_{i+1}<\cdots<(N_x)_j<x_j<(N_x)_{j+1}<\cdots<x_k$. Here, $(N_x)_j<x_j $ implies some other letters present before $x$ in the $j^{th}$ occurrence of the letters in $w$. But it contradicts our assumption, as $s(P_j)=x$.
 \\
 \textbf{Case 2}: If $(N_x)_1>x_1$, then for each $i^{th}$ occurrence of $N_x$ and $x$ in $w$ is $(N_x)_i>x_i$. Therefore, the occurrence of $N_x$ and $x$ is $x_1<(N_x)_1<x_2<(N_x)_2<\cdots<(N_x)_{i-1}<x_i<(N_x)_i<\cdots<(N_x)_{j-1} <x_j<(N_x)_j <\cdots<x_k$. Here, $(N_x)_i>x_i $ implies some other letters present after $x$ in the $i^{th}$ occurrence of the letters in $w$. But it contradicts our assumption that $l(P_i)=x$.
 Therefore, for all $P_i$ and $P_j$, $i\neq j$, $l(P_i)\neq s(P_j)$.

 Now, we know $s(w)=s(P_1)$ and $l(w)=l(P_k)$. If there exist a $P_i$ such that $s(w)=l(P_i)$ or $l(w)= s(P_i)$ then $s(w)=l(P_i)=s(P_1)$ or $l(w)= s(P_i)=l(P_k)$. But, it is not possible. Therefore, for any $P_i$, $s(w)\neq l(P_i)$, $l(w)\neq s(P_i)$. 
\end{proof}

 Now, according to the Lemma \ref{lm7}, for a word-representable graph $G$, having representation number $k\geq 3$, there are at least $3$ permutations labelled as $P_1$, $P_2$ and $P_3$ where none of them is same. Now, using these three permutations, we define the following homomorphism $\{P_1, P_2, P_3\}\rightarrow \{0,1,2\}$ for the square-free string $c$ mentioned in Theorem \ref{tm8}.
  \begin{equation*}
      h(x)=
    \begin{cases}
        P_1, & \text{if } x=2\\
        P_2, & \text{if } x=1\\
        P_3, &\text {if } x=0
    \end{cases}  
    \end{equation*}
    Now, $w'=h(c)$ where $P_1$, $P_2$ and $P_3$ are three different permutations obtained from a $k$-uniform word $w$ representing graph $G$.

    For a word $w$, $w[l]$ denotes the word obtained by restricting the word $w$ up to length $l$ from the initial position. For example, $w=154631467351423$, $w[5]=15463$ and $w[8]=15463146$. We use this notation in the following theorem.
\begin{theorem}\label{tm4}
    For a word-representable graph $G$, having representation number $k\geq 3$, if $w$ is the $k$-uniform word representing $G$ then, $ww'[n\times i]$ also represents the graph $G$, where $n=|V(G)|$, $i\in \mathbb{N}$ and $w'=h(c)$. 
\end{theorem}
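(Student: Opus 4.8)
The plan is to verify the defining alternation condition pair by pair, exploiting the fact that $w'[n\times i]$ is nothing but a concatenation of $i$ full permutations of $V(G)$, each equal to one of $P_1,P_2,P_3$. First I would record this structural remark: since $c=c_1c_2c_3\cdots$ and $h$ is a homomorphism sending each letter of $\{0,1,2\}$ to a block of length $n=|V(G)|$, we have $w'[n\times i]=h(c_1)h(c_2)\cdots h(c_i)$ with each $h(c_m)\in\{P_1,P_2,P_3\}$. In particular every letter of $V(G)$ already occurs in $w$, hence in $ww'[n\times i]$, so it only remains to check that for all $x\neq y$, the letters $x$ and $y$ alternate in $ww'[n\times i]$ if and only if $xy\in E(G)$.

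Fix $x,y\in V(G)$ and first suppose $x\nsim y$. Then $x$ and $y$ do not alternate in $w$, which means $w_{\{x,y\}}$ contains one of $xx$, $yy$ as a factor. This factor is still present in $\big(ww'[n\times i]\big)_{\{x,y\}}$, so $x$ and $y$ do not alternate in $ww'[n\times i]$, as required. Thus non-edges are handled automatically, since non-alternation of a pair can never be destroyed by appending further letters.

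The substantive case is $x\sim y$. Since $w$ is $k$-uniform and represents $G$, the letters $x$ and $y$ alternate in $w$, so $w_{\{x,y\}}=(xy)^k$ or $(yx)^k$; assume without loss of generality the former, i.e. $x_j<y_j$ for every $1\le j\le k$, where $x_j,y_j$ denote the $j$-th occurrences of $x$ and $y$ in $w$. By the definition of the occurrence permutations, $x$ then precedes $y$ in every $P_j$, in particular in $P_1,P_2,P_3$. Hence $\big(h(c_m)\big)_{\{x,y\}}=xy$ for each $m$, and therefore $\big(ww'[n\times i]\big)_{\{x,y\}}=(xy)^{k}(xy)^{i}=(xy)^{k+i}$, which alternates; the symmetric subcase gives $(yx)^{k+i}$. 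So every edge of $G$ is preserved. Combining the two cases, $ww'[n\times i]$ represents $G$.

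The only place where any care is needed is the observation that appending an \emph{arbitrary} permutation of $V(G)$ to a word representing $G$ may well destroy an edge, namely whenever that permutation orders the two endpoints oppositely to the way $w$ does; what rescues us here is precisely that the appended blocks are the occurrence permutations $P_1,P_2,P_3$ of $w$ itself, which by construction respect the relative order that every edge forces on its endpoints throughout $w$. Note that for the present statement we do not yet need $P_1,P_2,P_3$ to be pairwise distinct (Lemma \ref{lm6}) or the endpoint conditions of Lemma \ref{lm7}; those facts become relevant only when one additionally wishes to conclude that $ww'[n\times i]$ is square-free, which is not asserted here.
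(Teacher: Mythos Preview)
Your proof is correct and follows essentially the same approach as the paper's: you split into the cases $x\nsim y$ (non-alternation in $w$ persists after appending letters) and $x\sim y$ (the relative order of $x,y$ is the same in every occurrence permutation $P_j$, hence in each appended block $h(c_m)$, so alternation is preserved). Your observation that Lemmas~\ref{lm6} and~\ref{lm7} are not needed for this statement is accurate; the paper mentions them in passing but uses them only later, for square-freeness in Theorem~\ref{tm5}.
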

\begin{proof}
    $w'$ is the word where each letter of the square-free string $c$ is replaced with a permutation. From Lemma \ref{lm6} and Lemma \ref{lm7} it can be seen that there exist at least $3$ permutations $P_1$, $P_2$ and $P_3$ which are unique and $l(P_i)\neq s(P_j)$, where $i\neq j$, $1\leq i\leq 3$, $1\leq j\leq 3$. So, in the $P_iP_j$ factor, no trivial square exists. So, the following two cases need to be considered for $ww'[n\times i]$ to represent the graph $G$.
    \\ \textbf{Case 1}: If $x\sim y$ in $G$, then $x$ and $y$ should alternate in $ww'[n\times i]$. As, $w$ represents $G$, without loss of generality we assume the occurrences of $x$ and $y$ in $w$ is $x_1<y_1<x_2<y_2<\cdots<x_k<y_k$. So, in each $P_i$, $1\leq i\leq k$, the $i^{th}$ occurrences of $x$ and $y$ is $x_i<y_i$. Suppose we are taking the three permutation $P_a$, $P_b$ and $P_c$, where $a\neq b\neq c$ and $1\leq a\leq k$, $1\leq b\leq k$, $1\leq c\leq k$. In these three permutations, occurrences of $x$ and $y$ is $x_a<y_a$, $x_b<y_b$ and $x_c<y_c$. Hence, $x$ and $y$ alternate in the $w'[n\times i]$ implies $x$ and $y$ alternate in $ww'[n\times i]$.
    \\\textbf{Case 2}: if $x \nsim y$ in $G$, then $x$ and $y$ should not alternate in $ww'[n\times i]$. As, $w$ represents $G$, therefore non-alternation of $x$ and $y$ is preserved in $w$. So, non-alternation of $x$ and $y$ is preserved in $ww'[n\times i]$.
    Hence the word $ww'[n\times i]$ also represents $G$.
\end{proof}
We are using the word $w$ that represents a graph $G$ in $ww'[n\times i]$, and we will prove that this word is a square-free word. But, if the last letters of $w$ are a permutation of the vertices, then it may create a square with $w[n]$, as $w[n]$ is also a permutation. But, we claim that after removing at most $n=|V(G)|$, there does not exist any permutation in the last letters of $w$. 
\begin{lemma}\label{lm8}
    For a word-representable graph $G$ having the representation number $k$, $k>1$ and the $k$-uniform word $w$ representing $G$, if $w=w_1v$, where $v$ is the last letters of $w$ such that removing $v$ removes the permutation of the last letters then $|v|\leq n$ where $|v|$ is the length of $v$.
    \end{lemma}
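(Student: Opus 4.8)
\emph{Proof proposal.} Here $v$ is to be read as the shortest suffix of $w$ such that $w_1=w\setminus v$ no longer ends with a permutation of $V(G)$, i.e.\ such that the last $n=|V(G)|$ letters of $w_1$ are not pairwise distinct. (If the last $n$ letters of $w$ are already not pairwise distinct then $v$ is empty and $|v|=0\le n$, so we may assume $w$ itself ends with a permutation.) I would argue by contradiction, using the representation number to manufacture a square. Writing $w^{(t)}$ for the word obtained from $w$ by deleting its last $t$ letters, suppose $|v|>n$. By minimality of $v$ each of $w^{(0)},w^{(1)},\dots,w^{(n)}$ still ends with a permutation of $V(G)$; note all these words have length at least $n(k-1)\ge n$ since $k\ge 2$, so the statement is meaningful.

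The key step is to pin down the terminal blocks. Since the last $n$ letters of $w^{(0)}=w$ are pairwise distinct, each letter occurs there exactly once, and as they form a suffix of $w$ they are precisely the last occurrences of the letters; hence this suffix is the block $P_k=\sigma(w)=b_1b_2\cdots b_n$ appearing contiguously at the end of $w$. Removing it yields $w^{(n)}$, in which every letter occurs $k-1$ times, and the same argument shows the last $n$ letters of $w^{(n)}$ form the block $P_{k-1}=a_1a_2\cdots a_n$ appearing contiguously, immediately to the left of $P_k$ in $w$. Thus $w$ has suffix $a_1\cdots a_nb_1\cdots b_n$. Now for $1\le t\le n-1$ the last $n$ letters of $w^{(t)}$ are exactly $a_{n-t+1}\cdots a_nb_1\cdots b_{n-t}$, and for this to be a permutation the $t$-element set $\{a_{n-t+1},\dots,a_n\}$ and the $(n-t)$-element set $\{b_1,\dots,b_{n-t}\}$ must be disjoint, hence complementary in $V(G)$, which forces $\{b_1,\dots,b_{n-t}\}=\{a_1,\dots,a_{n-t}\}$. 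Running $t$ from $n-1$ down to $1$ and comparing these set equalities one index at a time gives $b_j=a_j$ for $1\le j\le n-1$, and then $b_n=a_n$ as the one remaining letter; that is, $P_{k-1}=P_k$.

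Finally, $P_{k-1}=P_k$ means the contiguous suffix $a_1\cdots a_nb_1\cdots b_n$ of $w$ is the square $P_kP_k$, which is non-trivial since $k>1$ forces $n=|V(G)|\ge 2$. This contradicts Lemma \ref{lm4}, which guarantees that a $k$-uniform word representing a connected graph of representation number $k$ is square-free (for $k\ge 3$ one could instead invoke Lemma \ref{lm6} directly, as it already forbids $P_{k-1}=P_k$). Hence $|v|\le n$. The only genuinely delicate point is the middle paragraph: observing that ``the last $n$ letters form a permutation'' forces the terminal block to be $P_k$, then $P_{k-1}$, and that sliding the length-$n$ window one position at a time through $P_{k-1}P_k$ leaves no freedom at all, collapsing the two permutations into a single repeated block.
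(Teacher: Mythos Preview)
Your proof is correct and follows essentially the same strategy as the paper: assume $|v|>n$, show that the last $2n$ letters of $w$ must then form a square $XX$ with $X$ a permutation of $V(G)$, and invoke Lemma~\ref{lm4} for the contradiction. The only cosmetic difference is organizational---the paper peels letters off one at a time from the right, forcing the $(n{+}t)$-th letter from the end to equal the $t$-th for each $t$, whereas you first pin down the two contiguous blocks as $P_{k-1}$ and $P_k$ and then run the sliding-window set equalities to collapse them---but the underlying deduction is the same.
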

    \begin{proof}
        Suppose for the word $w$, $|v|>n$. Let, $\{v_1,v_2,\cdots,v_n\}$ be the vertices of $G$. As $|v|>n$, at least each $v_i$ occurs once in $v$, or no permutation exists. Without loss of generality, we assume the last $n$ letters of $w$ are $v_1v_2\cdots v_n$. Now, after removing $v_n$, if there is still a permutation, then $w=w_2v_nv_1v_2\cdots v_n$ else there does not exist any permutation. Then removing $v_{n-1}$ also maintains the permutation implies $w=w_3v_{n-1}v_nv_1v_2\cdots v_{n-1}v_n$. If we continue this process up to $v_2$ then using the same argument, we can show that $w=w_4v_2\cdots v_nv_1v_2\cdots v_n$. As $|v|>n$, we must remove $v_1$. But then $w=w_5v_1v_2\cdots v_nv_1v_2\cdots v_n$, which creates a contradiction because according to Lemma \ref{lm4}, $w$ is a square-free word. Therefore $|v|\leq n$.
    \end{proof}
As we know, $ww'[n\times i]$ is also a word-representation of $G$, so we use this technique to create a $k+i$ uniform square-free word representing the graph $G$. The construction is as follows:
\begin{enumerate} 
    \item If last $n$ letters of $w$ is not create a permutation then we concatenate $w'[n\times i]$ with $w$.
    \item Else $w=w_1v$, where $v$ is last letters of $w$ such that removing $v$ remove the permutation of the last letters. From the Lemma \ref{lm8},  we know that$|v|\leq n$, according to Proposition \ref{uv}, $w_2=vw_1$ also represents $G$ and $w_2$ do not have any permutation in the last $n$ letter. This new word is also $k$-uniform, so it is square-free. Now, we concatenate $w'[n\times i]$ with the new word.  
\end{enumerate}
\begin{theorem}\label{tm5}
    For a word-representable graph $G$, having representation number $k\geq 3$, the word $ww'[n\times i]$, $i\in \mathbb{N}$ constructed using the above technique, is a square-free representation of $G$, where $w$ is a $k$-uniform square-free word-representation of $G$, and $w'=h(c)$. 
\end{theorem}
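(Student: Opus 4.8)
The plan is to show that $ww'[n\times i]$ represents $G$ and is square-free, handling the two properties separately. That $ww'[n\times i]$ represents $G$ is already Theorem \ref{tm4} (after applying Proposition \ref{uv} to rotate $w$ so that its last $n$ letters are not a permutation, which by Lemma \ref{lm8} costs at most $n$ letters; a rotation preserves representation by Proposition \ref{uv} and preserves $k$-uniformity hence square-freeness by Lemma \ref{lm4}). So the real content is square-freeness, and I would split the analysis of a hypothetical square $YY$ in $ww'[n\times i]$ according to where the square sits relative to the junction between $w$ and $w'[n\times i]$, and relative to the block structure of $w'$ as a concatenation of the permutations $P_1,P_2,P_3$.

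First I would record the structural facts I get for free: $w$ is square-free (Lemma \ref{lm4}); $c$ is square-free (Theorem \ref{tm8}); $P_1,P_2,P_3$ are pairwise distinct (Lemma \ref{lm6}) and satisfy $l(P_a)\neq s(P_b)$ for $a\neq b$, and $l(P_a)\neq s(P_b)$ prevents trivial squares at block boundaries (Lemma \ref{lm7}); every $P_a$ is a permutation of $V(G)$, so each letter occurs exactly once in a block of length $n$, and the block decomposition of $w'$ is \emph{synchronising} in the sense that knowing the letter-content of any window of length $n$ inside $w'$ essentially pins down the block alignment. Then I would argue: (i) a square $YY$ lying entirely inside $w$ is impossible by Lemma \ref{lm4}; (ii) a square lying entirely inside $w'[n\times i]$ would, after comparing the two halves block-by-block, force a square in the string $c$ of the same "block length", contradicting Theorem \ref{tm8} — here one uses that since the $P_a$ are distinct and have distinct first/last letters, the only way $h(c_ac_{a+1}\cdots)$ can coincide with a shifted copy of itself is a shift by a multiple of $n$, so that the induced sequence of subscripts is a square in $c$; (iii) a square straddling the junction: write $YY = u_2 u_2'$ where $u_2'$ starts inside $w'[n\times i]$; since $w'[n\times i]$ begins with a full permutation $P_1$ (or whichever $h(c_1)$ is) and $w$ ends with a string that is \emph{not} a permutation on its last $n$ letters by our rotation, a counting argument on letter multiplicities in windows of length $n$ rules this out, or else pushes the equality far enough left that it again becomes a square inside $w$.

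The main obstacle I expect is case (ii): ruling out "internal" squares in $w'[n\times i]$ whose period is \emph{not} a multiple of $n$. A square $YY$ with $|Y|=qn+r$, $0<r<n$, would overlap the block grid in a staggered way, and I need to show this is incompatible with the three permutations being distinct and boundary-disjoint (Lemma \ref{lm7}). The clean way is to look at a single block-boundary letter: inside $w'$, a letter $x$ occurs exactly once per block, so the positions of $x$ form an arithmetic-like progression with gaps exactly $n$ (gaps depend only on which permutations are adjacent, but every gap is $n$ because each $P_a$ is a full permutation); therefore any period of a square must be a multiple of $n$, since $YY$ must map the occurrence pattern of each letter to itself. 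Once the period is $qn$, the square $YY$ restricted to block subscripts is literally a square in $c$, contradicting Theorem \ref{tm8}. I would also need the small bookkeeping remark that $w'[n\times i]$ is a prefix ending exactly at a block boundary (length $n\times i$), so no "half block" dangles at the right end to create a spurious short square; and that concatenating $w$ (length $kn$) in front only adds the one new junction handled in case (iii). Assembling (i)–(iii) gives that $ww'[n\times i]$ has no square, trivial or non-trivial, completing the proof.
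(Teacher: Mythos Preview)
Your three-case decomposition (square inside $w$; square inside $w'[n\times i]$; square straddling the junction) is more explicit than the paper's argument, and cases (i) and (iii) are in the right spirit. But case (ii) contains a genuine error. You assert that in $w'$ a fixed letter $x$ occurs at positions with ``gaps exactly $n$'', and from this deduce that any period must lie in $n\mathbb{Z}$. The gap claim is false: if $x$ sits at position $p$ in block $P_{a_j}$ and at position $q$ in block $P_{a_{j+1}}$, the distance between those two occurrences is $n+q-p$, which equals $n$ only when $p=q$. Since $P_1,P_2,P_3$ are distinct permutations, $p\neq q$ for at least one letter at every block boundary, so the positions of $x$ are not an arithmetic progression and your period argument does not go through as written.

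The paper takes a different route and never appeals to periodicity of single letters. Its key tool is Lemma~\ref{lm2}: because $ww'[n\times i]$ represents the connected graph $G$ (Theorem~\ref{tm4}), any square factor $XX$ must contain \emph{every} vertex in $X$, hence $|X|\ge n$. With this in hand the paper looks at a hypothetical square inside two consecutive blocks $P_aP_b$: since $|P_aP_b|=2n$ and $|XX|\ge 2n$, one is forced to $XX=P_aP_b$, i.e.\ $P_a=P_b$, contradicting the distinctness from Lemma~\ref{lm6} and the square-freeness of $c$. Trivial squares at block boundaries are dispatched by Lemma~\ref{lm7}, and the junction with $w$ by the rotation that removes any trailing permutation (Lemma~\ref{lm8}). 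The paper's write-up is brief and does not spell out squares spanning three or more blocks, whereas you at least flag that as the main obstacle; but the tool it uses is the correct one.

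If you want to repair your case (ii) along your own lines, replace the gap claim by Lemma~\ref{lm3} together with connectedness of $G$: adjacent vertices have equal multiplicity in $X$, hence all vertices do, hence $|X|\in n\mathbb{Z}$. That gives the ``period is a multiple of $n$'' conclusion you were after, and then your block-by-block comparison reducing to a square in $c$ can proceed.
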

\begin{proof}
    As $w$ is a $k$-uniform word, according to Lemma \ref{lm4}, $w$ is a square-free word. Now, $P_1$, $P_2$, and $P_3$ are unique permutations present in $w$, and there does not exist any square in any of them. From the square-free string $c$, we can say that in $w'[n\times i]$, there exists no square of form $P_iP_i$. Also, from the construction of $ww'[n\times i]$, we already avoid the occurrence of permutation in the last $n$ letters of $w$. So, there does not exist any square of form $P_iP_i$ in $ww'[n]$. Therefore, for the following cases, a square may occur.\\
    \textbf{Case 1}: If there exists a trivial square $xx$ in $P_iP_j$, $i\neq j$, $1\leq i\leq 3$, $1\leq j\leq 3$, but according to Lemma \ref{lm7}, $l(P_i)\neq s(P_j)$. So, no trivial square exists in $P_iP_j$. \\
    \textbf{Case 2}: If there exist some $XX$ factors in $P_iP_j$ such that $P_iP_j=uXXv$, $u$ or $v$ is a nonempty string. We know $ww'[n\times i]$ represents the graph $G$, and according to Lemma \ref{lm2}, all vertices are present in $X$. Every vertex occurs only once in $P_i$ and $P_j$, so $XX=P_iP_j$. But it implies that square-free string $c$ has a square. Therefore, there does not exist any squares in $ww'[n\times i]$.
\end{proof}

Now, applying the same technique used in the word $ww'[n\times i]$, we can create more square-free words from $2$-uniform word-representation of a word-representable graph. But we get only two permutations for a $2$-uniform word $w$. We prove for $k>2$, we can find unique permutations in $k$-uniform words. But, for $k=2$, if $P_1=P_2$, then we can perform a cyclic shift of that word and derive a new word, where $P_1\neq P_2$. We prove this statement in the following.
\begin{lemma}\label{lm17}
If $G$ is a connected word-representable graph, and the representation number of $G$ is $2$, then there exists a word $w$ such that $\pi(w)\neq \sigma(w)$ and $w$ represents the graph $G$.
\end{lemma}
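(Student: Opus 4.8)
The plan is to start from a $2$-uniform representation of $G$ and, if its initial and final permutations happen to coincide, repair this with a single cyclic shift, using square-freeness to eliminate the one obstructive configuration.

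Since the representation number of $G$ is $2$, fix a $2$-uniform word $w$ representing $G$. As $G$ is connected, Lemma~\ref{lm4} guarantees that $w$ is square-free; in particular $w$ contains no trivial square and is not a literal square of the form $PP$. If $\pi(w)\neq\sigma(w)$ we are already done, so assume $\pi(w)=\sigma(w)=p_1p_2\cdots p_n$, where $n=|V(G)|$. I would then extract the structure this forces: among the positions $1,\dots,2n$ of $w$, the first-occurrence positions and the last-occurrence positions form a partition; position $1$ holds the first occurrence of $p_1$, and since $\sigma(w)$ begins with $p_1$, the last occurrence of $p_1$ --- say at position $m$ --- is the leftmost last-occurrence position. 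Hence every position $2,\dots,m-1$ is a first occurrence, and reading $\pi(w)=p_1p_2\cdots$ off these positions shows they hold $p_2,\dots,p_{m-1}$ in order; since there are only $n$ first occurrences in total, $m\le n+1$.

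The decisive step is to rule out $m=n+1$. If $m=n+1$, then positions $1,\dots,n$ are exactly the first occurrences $p_1,\dots,p_n$ and (as $\sigma(w)=p_1\cdots p_n$) positions $n+1,\dots,2n$ are exactly the last occurrences $p_1,\dots,p_n$, forcing $w=p_1\cdots p_np_1\cdots p_n$, a square --- contradicting Lemma~\ref{lm4}. So $m\le n$. Now let $w'$ be obtained from $w$ by deleting its first letter $p_1$ and re-appending it at the end; by Proposition~\ref{uv}, $w'$ also represents $G$. Under this operation the two occurrences of $p_1$ move to positions $m-1$ and $2n$ of $w'$, while every other letter keeps both of its occurrences, merely shifted one place left and in unchanged relative order. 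Consequently the rightmost occurrence of any letter of $w'$ is the appended $p_1$ at position $2n$, so $\sigma(w')=p_2p_3\cdots p_np_1$; on the other hand the first occurrences of $p_2,\dots,p_{m-1}$ now occupy positions $1,\dots,m-2$ and that of $p_1$ occupies position $m-1$, so $\pi(w')=p_2p_3\cdots p_{m-1}p_1p_m\cdots p_n$. Thus $p_1$ stands in slot $m-1\le n-1$ of $\pi(w')$ but in slot $n$ of $\sigma(w')$, whence $\pi(w')\neq\sigma(w')$, and $w'$ is the word we want.

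The one genuine obstacle is the resonant case $m=n+1$: there a single cyclic shift merely moves the coincidence from the front of the word to its back, so it is useless; the saving observation is that this case is precisely a literal square $PP$, which is excluded because a $k$-uniform word for a connected graph of representation number $k$ is square-free (Lemma~\ref{lm4}). Everything else is routine bookkeeping of which positions become first versus last occurrences after the shift.
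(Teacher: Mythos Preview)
Your proof is correct and follows essentially the same strategy as the paper: start from a $2$-uniform representant, rule out the degenerate case $w=PP$, and repair the coincidence $\pi(w)=\sigma(w)$ by a single cyclic shift. The only cosmetic differences are that the paper shifts the \emph{last} letter to the front rather than the first letter to the back, and it dispatches the degenerate case by observing directly that $PP$ represents a complete graph (hence has representation number $1$), whereas you invoke Lemma~\ref{lm4} to exclude the literal square. Your position-tracking argument is in fact more carefully written than the paper's block-decomposition sketch.
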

\begin{proof}
Suppose $w$ is a $2$-uniform word that represents the graph $G$, and 
$\pi(w)=\sigma(w)$. We assume that, $w=(P_1)_1(P_2)_1(P_1)_2(P_2)_2\cdots 
(P_1)_i(P_2)_j$, $2\leq i\leq n$, $2\leq j\leq n$, and $P_1=(P_1)_1(P_1)_2\cdots
(P_1)_i=\pi(w)$, $P_2=(P_2)_1(P_2)_2\cdots (P_2)_j=\sigma(w)$. If $i=j=1$, then 
$w=P_1P_2$. But $P_1=P_2$, so $w$ represents the complete graph, and the complete
graph is $1$-represenatable. Let, $l$ be the last letter of $ (P_2)_j$, then $l$ 
also be the last letter of $(P_1)_i$. According to Proposition \ref{uv}, 
$w'=l(P_1)_1(P_2)_1(P_1)_2(P_2)_2\cdots (P_1)_i(P_2)_j\setminus l$ is also 
represent the graph $G$. In the word $w'$, $\pi(w')$ start with $l$ but 
$\sigma(w')$ cannot start with $l$ because, $l\notin (P_2)_1$. Therefore, in the word $w'$, $\pi(w')\neq \sigma(w')$ and $w'$ represents the graph $G$.
\end{proof}
According to Lemma \ref{lm17}, we can find a word $w$ such that $\pi(w)\neq \sigma(w)$. So, we define $P_1$ and $P_2$ as $\pi(w)$ and $\sigma(w)$ respectively, and $P_3$ as $w$ in the word $ww'[n\times i]$.

\begin{corollary}\label{tm6}
    The word $ww'[n\times i]$ for a 2-uniform word-representable graph $G$ is also square-free if $P_1$ and $P_2$ are the $\pi(w)$ and $\sigma(w)$ respectively and $P_3$ is the word $w$ that represents the graph $G$.
\end{corollary}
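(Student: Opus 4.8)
The statement is Theorem~\ref{tm5} with the triple of permutations coming from a $k$-uniform word ($k\ge 3$) replaced by the triple $(P_1,P_2,P_3)=(\pi(w),\sigma(w),w)$ attached to a suitable $2$-uniform word $w$ (here $G$ is connected, as required by Lemma~\ref{lm17}, and $n=|V(G)|$). So the plan is to re-run the proof of Theorem~\ref{tm5} after checking that this new triple of ``blocks'' has the two features that proof actually uses. First I would fix $w$: Lemma~\ref{lm17} gives a $2$-uniform representation $w$ of $G$ with $\pi(w)\neq\sigma(w)$, and if the last $n$ letters of $w$ form a permutation I replace $w$ by the cyclic shift used in the construction preceding Theorem~\ref{tm5} (legitimate by Proposition~\ref{uv} and Lemma~\ref{lm8}), so that $w$ no longer ends in a permutation. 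Put $P_1=\pi(w)$, $P_2=\sigma(w)$, $P_3=w$, and $w'=h(c)$. The three blocks are pairwise distinct ($P_1\neq P_2$ by the choice of $w$, and $|P_3|=2n>n=|P_1|=|P_2|$), and, because $w$ does not end in a permutation, $w$ cannot be written as a product of the permutation blocks $P_1,P_2$; together with the square-freeness of $c$ (Theorem~\ref{tm8}) this keeps the block decomposition of $ww'[n\times i]$ unambiguous, with distinct consecutive blocks.

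Next I would check the ``no boundary coincidence'' condition used in Theorem~\ref{tm5}: $l(P_a)\neq s(P_b)$ for every ordered pair of distinct blocks, and also at the junction between the leading copy of $w$ and $w'$. Since $s(P_3)=s(w)=s(\pi(w))=s(P_1)$ and $l(P_3)=l(w)=l(\sigma(w))=l(P_2)$, every such inequality reduces to one of: Lemma~\ref{lm7} (for the pairs from $\{P_1,P_2\}$, and for $l(w)\neq s(P_1)$, i.e.\ $l(P_2)\neq s(P_1)$); the trivial fact that a permutation of $n\ge 2$ letters does not begin and end with the same letter (for $l(P_1)\neq s(P_1)$ and $l(P_2)\neq s(P_2)$); and $s(w)\neq l(w)$, which holds because if $w$ began and ended with the same letter $x$ then $x$ would occur only at the two ends of $w$, giving $w_{\{x,y\}}=xyyx$ for every other vertex $y$, so $x$ would be isolated, contradicting connectedness of $G$.

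With these two features in hand, the plan is to reproduce the proof of Theorem~\ref{tm5}. The representation claim is exactly the argument of Theorem~\ref{tm4}: if $x\sim y$, each of $P_1,P_2,P_3$ restricted to $\{x,y\}$ is a nonempty alternating word beginning with whichever of $x,y$ comes first in $w$ and ending with the other, so every concatenation of blocks, and any prefix of such a concatenation, stays alternating; if $x\nsim y$, the non-alternation already present in the prefix $w$ persists in $ww'[n\times i]$. For square-freeness: a trivial square is an adjacency of equal letters, impossible inside a block ($P_1,P_2$ are permutations and $P_3=w$ is square-free by Lemma~\ref{lm4}) and impossible across a junction by the boundary condition just verified; and a non-trivial square $XX$ would, since $ww'[n\times i]$ represents the connected graph $G$, force $X$ to contain every vertex (Lemma~\ref{lm2}) with a common multiplicity $m$ (Lemma~\ref{lm3}), hence $|X|=mn$, and an occurrence count (each vertex occurs at most twice in any block, and exactly once in $P_1,P_2$) would then force both halves of $XX$ to consist of whole blocks, producing a square in the square-free word $c$ --- a contradiction.

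The step I expect to be the main obstacle is the last one: showing that a non-trivial square in $ww'[n\times i]$ must be block-aligned now that blocks have two different lengths ($n$ and $2n$), so that the reduction to a square in $c$ is legitimate. This is also precisely where the cyclic-shift preprocessing of $w$ earns its keep, since it is what prevents $w$ from splitting into permutation blocks and thereby pins down the block boundaries. By comparison, the distinctness of $P_1,P_2,P_3$, the boundary inequalities, and the representation claim should be routine once $w$ has been chosen as above.
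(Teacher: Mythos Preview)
Your plan is correct and follows the same route as the paper: invoke Lemma~\ref{lm7} to rule out trivial squares at block junctions (your case analysis via $s(P_3)=s(P_1)$, $l(P_3)=l(P_2)$ is exactly how Lemma~\ref{lm7} covers all six ordered pairs), and then re-run the argument of Theorem~\ref{tm5}. You are in fact more careful than the paper on the one genuinely delicate point---block-alignment of a non-trivial square when blocks have lengths $n$ and $2n$---which the paper's two-line proof simply subsumes under ``the same argument as in Theorem~\ref{tm5}''; your use of Lemmas~\ref{lm2}--\ref{lm3} to force $|X|\in n\mathbb{N}$ and then an occurrence count to pin $XX$ to whole blocks is the right way to fill that in.
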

\begin{proof}
    According to Lemma \ref{lm7}, there does not exist any trivial square between any two of $P_1$, $P_2$ and $P_3$ because $l(P_i)\neq s(P_j)$, $i\neq j$, $1\leq i\leq 3$, $1\leq j\leq 3$. Then, using the same argument in Theorem \ref{tm5}, we can prove that $ww'[n\times i]$ is square-free. 
\end{proof}
If we get a $k$-uniform square-free word, we can create the word $ww'[n\times i]$ and extend the $k$-uniform word to $(k+1)$-uniform word. Also, from the Theorem \ref{tm5} and Corollary \ref{tm6}, we know that $ww'[n\times i]$ is square-free. Therefore, if $G$ is a non-complete word-representable connected graph, then there exists an infinite number of square-free words that represent the graph $G$.

\section{Conclusion}
We prove that for any simple word-representable graph $G$, there exists a square-free word $w$ representing $G$. We count the number of possible square-free word-representations of a complete graph. Moreover, we also introduce a way to create arbitrary long square-free words representing a simple word-representable graph. We list some open problems and directions for further research related to these topics below.
\begin{enumerate}
    \item The number of square-free words representing a non-complete connected graph is shown to be infinite. Will it be the same for disconnected graphs?  
    \item  We remove the $XX$ pattern from a word $w$, which still represents the same graph. There exist other patterns in a word like an abelian square ($XP(X)$, $P(X)$ is a permutation of $X$), border ($XuX$) etc. Will removing those patterns still represent the same graph?
    \item If after removing different patterns from a word, that word still represents the same graphs, then count the number of such pattern-free words possible that represent the same graph.

    \end{enumerate}

	\bibliographystyle{plain}
	\bibliography{ref.bib}

\begin{thebibliography}{1}

\bibitem{kitaev2017comprehensive}
Sergey Kitaev.
\newblock A comprehensive introduction to the theory of word-representable graphs.
\newblock In {\em International Conference on Developments in Language Theory}, pages 36--67. Springer, 2017.

\bibitem{kitaev2015words}
Sergey Kitaev and Vadim Lozin.
\newblock {\em Words and graphs}.
\newblock Springer, 2015.

\bibitem{kitaev2008representable}
Sergey Kitaev and Artem Pyatkin.
\newblock On representable graphs.
\newblock {\em Journal of automata, languages and combinatorics}, 13(1):45--54, 2008.

\bibitem{kitaev2008word}
Sergey Kitaev and Steve Seif.
\newblock Word problem of the perkins semigroup via directed acyclic graphs.
\newblock {\em Order}, 25(3):177--194, 2008.

\bibitem{shallit2009asecond}
Jeffrey Shallit.
\newblock {\em A second Course in Formal Languages And Automata Theory}.
\newblock Cambridge University Press, 2009.

\bibitem{thue1906uber}
Axel Thue.
\newblock Uber unendliche zeichenreihen.
\newblock {\em Norske Vid Selsk. Skr. I Mat-Nat Kl.(Christiana)}, 7:1--22, 1906.

\end{thebibliography}
\end{document}